\newtheorem{theorem}{Theorem}[section]
\newtheorem{corollary}{Corollary}[section]
\newtheorem{lemma}{Lemma}[section]
\newtheorem{definition}{Definition}[section]
\newtheorem{remark}{Remark}[section]
\newcommand{\beq}{\begin{equation}}
\newcommand{\eeq}{\end{equation}}
\newcommand{\beqn}{\begin{eqnarray}}
\newcommand{\eeqn}{\end{eqnarray}}
\begin{document}

\title{Almost automorphic functions on the quantum time scale and applications\thanks{This work is supported by
the National Natural Sciences Foundation of People's Republic of
China under Grant 11361072.}  }
\author{ Yongkun Li\\
Department of Mathematics, Yunnan University\\
Kunming, Yunnan 650091\\
People's Republic of China
}

\date{}
\maketitle \allowdisplaybreaks
\begin{abstract}
In this paper, we first propose two types of concepts of almost automorphic functions on the quantum time scale.
Secondly, we study some  basic properties of almost automorphic functions on the quantum time scale. Then, we introduce a transformation between functions defined on the quantum time scale and functions defined on the set of generalized integer numbers, by using this transformation we give  equivalent definitions of almost automorphic functions on the quantum time scale. Finally, as an application of our results, we establish the existence of almost automorphic solutions of linear and semilinear dynamic equations on the quantum time scale.
\end{abstract}

\textbf{Key words:}  Almost automorphic function; Automorphic solution; Quantum time scale.

\allowdisplaybreaks
\section{Introduction}

\setcounter{section}{1}
\setcounter{equation}{0}
\indent

\setcounter{equation}{0}
 \indent \allowdisplaybreaks

Since the theory of quantum calculus  has important applications in quantum theory (see Kac and Cheung
\cite{q1}), it has
received much attention. For example,
since Bohner and Chieochan \cite{q5} introduced the  concept of periodicity for functions defined on the quantum time scale, quite a few authors  have devoted themselves to the study of periodicity  for dynamic equations on the quantum time scale (\cite{q2a,q2,q3,q4}).

However, in reality, almost periodic phenomenon is  more common and complicate than periodic one. In addition, the almost automorphy is a generalization of almost periodicity and plays an important role in understanding the almost periodicity.
Therefore, to study the almost automorphy of  dynamic equations on the quantum time scale
 is  more interesting and more challenge.

Our main purpose of this paper is   to propose two types of definitions of almost automorphic functions on the quantum time scale,   study some of their basic properties  and establish the existence  of almost automorphic solutions of non-autonomous linear dynamic equations on the quantum time scale.

The organization of this paper is as follows: In Section 2, we
introduce some notations and definitions of time scale calculus. In Section 3, we propose the
concepts of  almost automorphic functions on the quantum time
scale and investigate some of their basic properties.  In Section 4, we introduce a transformation and give an equivalent definition of   almost automorphic functions on the quantum time scale.  In Section 5,  as an  application of the results, we study the existence of almost automorphic solutions for semilinear dynamic equations on the quantum time scale. We draw a conclusion in Section 6.

\section{Preliminaries}

\setcounter{section}{2}
\setcounter{equation}{0}
\indent

In this section, we shall recall some basic definitions of time scale calculus.

A time scale $\mathbb{T}$ is an arbitrary nonempty closed subset of the real numbers, the forward and backward jump operators $\sigma$, $\rho:\mathbb{T}\rightarrow \mathbb{T}$ and the forward graininess $\mu:\mathbb{T}\rightarrow \mathbb{R}^{+}$ are defined, respectively, by
\[
\sigma(t):=\inf \{s\in\mathbb{T}:s> t\},\,\,\rho(t):=\sup\{s\in\mathbb{T}:s<t\}\,\,
\text{and}\,\,\mu(t)=\sigma(t)-t.
\]

A point $t$ is said to be left-dense if $t>\inf\mathbb{T}$ and $\rho(t)=t$, right-dense if $t<\sup\mathbb{T}$ and $\sigma(t)=t$, left-scattered if $\rho(t)<t$ and right-scattered if $\sigma(t)>t$. If $\mathbb{T}$ has a left-scattered maximum $m$, then $\mathbb{T}^{\kappa}=\mathbb{T}\backslash m$, otherwise $\mathbb{T}^{\kappa}=\mathbb{T}$. If $\mathbb{T}$ has a right-scattered minimum $m$, then $\mathbb{T}_{\kappa}=\mathbb{T}\backslash m$, otherwise $\mathbb{T}^{k}=\mathbb{T}$.

Let $\mathbb{X}$ be a $($real or complex$)$ Banach space.  A function $f : \mathbb{T}\rightarrow \mathbb{X}$ is right-dense continuous or rd-continuous provided it is continuous at right-dense points in $\mathbb{T}$ and its left-sided limits exist (finite) at left-dense points in $\mathbb{T}$. If $f$ is continuous at each right-dense point and each left-dense point, then $f$ is said to be a continuous function on $\mathbb{T}$.

For $f:\mathbb{T}\rightarrow\mathbb{X}$ and $t\in{\mathbb{T}^{k}}$, then $f$ is called delta differentiable at $t\in{\mathbb{T}}$ if there exists $c\in\mathbb{X}$ such that for given any $\varepsilon\geq{0}$, there is an open neighborhood $U$ of  $t$ satisfying
\[
||[f(\sigma(t))-f(s)]-c[\sigma(t)-s]||\leq\varepsilon||\sigma(t)-s|
\]
for all $s\in U$. In this case, $c$ is called the delta derivative of $f$ at $t\in{\mathbb{T}}$, and is denoted by $c=f^{\Delta}(t)$. For $\mathbb{T}=\mathbb{R}$, we have $f^{\Delta}=f^{'}$, the usual derivative, for $\mathbb{T}=\mathbb{Z}$ we have the backward difference operator, $f^{\Delta}(t)=\Delta f(t):=f(t+1)-f(t)$£¬ and for $\mathbb{T}=\overline{q^{\mathbb{Z}}} (q>1)$, the quantum time scale, we have  the $q$-derivative
\[
f^\Delta(t):=D_qf(t)=\displaystyle\left\{\begin{array}{lll}\frac{f(qt)-f(t)}{(q-1)t},&t\neq 0,\\
\lim\limits_{t\rightarrow 0}\frac{fqt)-f(t)}{(q-1)t},&t=0.
\end{array}\right.
\]
\begin{remark}Note that
\[
D_qf(0)=\frac{df(0)}{dt}
\]
if $f$ is continuously differentiable.
\end{remark}
A function $p:\mathbb{T}\rightarrow\mathbb{R}$ is called regressive provided $1+\mu(t)p(t)\neq 0$ for all $t\in{\mathbb{T}^{\kappa}}$.
An $n\times n$-matrix-valued function $A$ on a time scale
$\mathbb{T}$ is called regressive provided
$I+\mu(t)A(t)$ is invertible for all $t\in\mathbb{T}^\kappa$.

\begin{definition}\label{def21}\cite{liw} A time scale $\mathbb{T}$ is called an almost periodic time scale if
\begin{eqnarray*}
\Pi=\big\{\tau\in\mathbb{R}: t\pm\tau\in\mathbb{T}, \forall t\in{\mathbb{T}}\big\}\neq\{0\}.
\end{eqnarray*}
\end{definition}
For more details about the theory of time scale calculus and the theory of quantum calculus, the reader may want to consult \cite{q1, t1,t2,tt}.

\section{Almost automorphic functions on the quantum time scale}

\setcounter{equation}{0}
\indent

In this section, we propose two types of concepts of almost automorphic functions on the quantum time scale and study some of their basic properties. Our first type of concepts of almost automorphic functions on the quantum time scale is as follows:

\begin{definition}\label{d2.1}
Let $\mathbb{X}$ be a $($real or complex$)$ Banach space and $f:\overline{q^\mathbb{Z}}\rightarrow \mathbb{X}$ a $($strongly$)$ continuous function. We say that $f$ is almost automorphic if for every sequence of integer numbers $\{s'_{n}\}\subset \mathbb{Z}$, there exists a subsequence $\{s_{n}\}$ such that:
\[
g(t):=\lim\limits_{n\rightarrow\infty}f(tq^{s_{n}})
\]
is well defined for each $t\in \overline{q^\mathbb{Z}}$ and
\[
\lim\limits_{n\rightarrow\infty}g(tq^{-s_{n}})=f(t)
\]
for each $t\in \overline{q^\mathbb{Z}}$.
\end{definition}

\begin{remark}\label{r1}
Since $\overline{q^\mathbb{Z}}$ has only one right dense point $0$ and all of the other points of it are  isolated points. So,
$f:\overline{q^\mathbb{Z}}\rightarrow \mathbb{X}$ a $($strongly$)$ continuous function if and only if $\lim\limits_{t\rightarrow 0^+}f(t)=f(0)$.
\end{remark}

\begin{theorem}\label{t21}
If $f, f_{1}$ and $f_{2}$ are almost automorphic functions $\overline{q^\mathbb{Z}}\rightarrow \mathbb{X}$, then the following are true:
\begin{itemize}
  \item [$(i)$] $f_{1}+f_{2}$ is almost automorphic.
  \item [$(ii)$] $cf$ is almost automorphic for every scalar c.
  \item [$(iii)$] $f_{a}(t)\equiv f(tq^a)$ is almost automorphic for each fixed $a\in \mathbb{Z}.$
  \item [$(iv)$] $\sup\limits_{t\in\mathbb{R}}\|f(t)\|<\infty$, that is, $f$ is a bounded function.
  \item [$(v)$] The range $R_{f}=\{f(t)|t\in \overline{q^\mathbb{Z}}\}$ of $f$ is relatively compact in $\mathbb{X}.$
\end{itemize}
\end{theorem}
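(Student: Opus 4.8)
The plan is to treat the five assertions separately, relying throughout on the fact that ``translation'' on $\overline{q^{\mathbb{Z}}}$ is multiplication by powers of $q$, and on the decisive trick of evaluating a limit function at the single point $t=1$.

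For $(i)$ I would argue by nested extraction. Given an arbitrary sequence $\{s'_n\}\subset\mathbb{Z}$, I first apply almost automorphy of $f_1$ to obtain a subsequence along which $g_1(t)=\lim f_1(tq^{s_n})$ is well defined and returns to $f_1$, then apply almost automorphy of $f_2$ to a \emph{further} subsequence of it. Because the limits defining $g_1$ persist along any subsequence, both $f_1$ and $f_2$ behave well along the final common subsequence; setting $g=g_1+g_2$ and using linearity of the limit yields $\lim(f_1+f_2)(tq^{s_n})=g(t)$ and $\lim g(tq^{-s_n})=f_1(t)+f_2(t)$. Assertion $(ii)$ is then immediate from the homogeneity of limits, with associated function $cg$.

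For $(iii)$ I would perform the change of variable directly: given $\{s'_n\}$, extract the subsequence $\{s_n\}$ furnished by almost automorphy of $f$, and observe that $f_a(tq^{s_n})=f((tq^a)q^{s_n})\to g(tq^a)=:g_a(t)$, while $g_a(tq^{-s_n})=g((tq^a)q^{-s_n})\to f(tq^a)=f_a(t)$. Hence $f_a$ is almost automorphic with associated function $g_a(t)=g(tq^a)$.

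The two substantive statements are $(iv)$ and $(v)$, and both hinge on the same observation. For $(iv)$, suppose to the contrary that $f$ is unbounded. Since $f(0)$ is a single finite value and, by Remark \ref{r1}, $f$ is controlled near the sole right-dense point $0$, any blow-up must occur along points $q^{k_n}$ with $k_n\in\mathbb{Z}$ and $\|f(q^{k_n})\|\to\infty$. Applying the definition to the integer sequence $\{k_n\}$ yields a subsequence $\{s_n\}$ for which $g(1)=\lim_n f(q^{s_n})$ exists in $\mathbb{X}$; but $\|f(q^{s_n})\|\to\infty$ along this subsequence, which is absurd. For $(v)$ I would establish relative compactness through sequential compactness in the Banach space $\mathbb{X}$: given any sequence in $R_f$, either infinitely many terms equal $f(0)$, giving a convergent constant subsequence, or the terms are $f(q^{k_n})$, in which case the extraction used for $(iv)$ produces a subsequence $f(q^{s_n})\to g(1)$. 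The main obstacle is organizing these extractions cleanly and, for $(iv)$--$(v)$, recognizing that testing the associated function at $t=1$ is precisely what converts almost automorphy into the required boundedness and precompactness.
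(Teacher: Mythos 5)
Your proposal is correct and follows essentially the same route as the paper: parts $(i)$--$(iii)$ by the standard nested-extraction and change-of-variable arguments (which the paper simply declares obvious), and parts $(iv)$--$(v)$ by exactly the paper's device of applying the definition to an integer sequence and evaluating the limit function at $t=1$ to get a contradiction with unboundedness, respectively a convergent subsequence in $R_f$. Your extra care with the point $t=0$ in $(iv)$--$(v)$ is a small refinement the paper glosses over, but it does not change the argument.
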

\begin{proof}
The proofs of  $(i)$, $(ii)$, and $(iii)$ are obvious.

The proof of $(iv)$. If $(iv)$ is no true, then $\sup\limits_{t\in \overline{q^\mathbb{Z}}}\|f(t)\|=\infty$. Hence,  there exists a sequence  $\{s'_{n}\}\subset \mathbb{Z}$ such that
\[
\lim\limits_{n\rightharpoonup\infty}\|f(q^{s'_{n}})\|=\infty.
\]
Since $f$ is almost automorphic, one can extract a subsequence $\{s_{n}\}\subset\{s'_{n}\}$ such that
\[
\lim\limits_{n\rightharpoonup\infty}f(q^{s_{n}})=\xi
\]
exists, that is, $\lim\limits_{n\rightharpoonup\infty}\|f(q^{s_{n}})\|=\|\xi\|<\infty$, which is a contradiction. The proof of  $(iv)$ is completed.

The proof of $(v)$. For any sequence $\{f(q^{s'_{n}})\}$ in $R_{f}$, where $\{s'_n\}\subset \mathbb{Z}$, because $f$ is almost automorphcic, so one can extract a subsequence $\{s_{n}\}$ of $\{s'_{n}\}$ such that
\[
\lim\limits_{n\rightarrow\infty}f(q^{s_{n}})=g(1).
\]
Thus, $R_{f}$ is relatively compact in $\mathbb{X}$. The proof is complete.
\end{proof}

\begin{remark}\label{p1}
It is easy to see that
\[
\sup\limits_{t\in \overline{q^\mathbb{Z}}}\|g(t)\|\leq\sup\limits_{t\in \overline{q^\mathbb{Z}}}\|f(t)\|,
\]
and $R_{g}\subseteq\bar{R}_{f}$, where $g$ is the function that appears in Definition \ref{d2.1}.
\end{remark}

\begin{theorem}\label{t2.2}
If $f:\overline{q^\mathbb{Z}}\rightarrow \mathbb{X}$ is almost automorphic, define a function $f^{\star}: \overline{q^\mathbb{Z}}\setminus\{0\}\rightarrow \mathbb{X}$  by $f^{\star}(t)\equiv f(t^{-1})$, if $f^{\star}(0):=\lim\limits_{n\rightarrow -\infty}f^{\star}(q^n)$ exists.
Then $f^{\star}: \overline{q^\mathbb{Z}}\rightarrow \mathbb{X}$  is almost automorphic.
\end{theorem}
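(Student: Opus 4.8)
The plan is to reduce the almost automorphy of $f^\star$ to that of $f$ through the single change of variable $t\mapsto t^{-1}$, which on $\overline{q^\mathbb{Z}}\setminus\{0\}$ merely reverses the sign of the exponent: it sends $q^{k}$ to $q^{-k}$ and, crucially, converts a forward shift $t\mapsto tq^{s_{n}}$ into a backward shift in the argument of $f$. The only place requiring separate care is the point $0$, since inversion is undefined there; this is exactly where the continuity hypothesis is used.

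First I would settle continuity. Every point of $\overline{q^\mathbb{Z}}\setminus\{0\}$ is isolated, so $f^\star$ is automatically continuous there; by Remark \ref{r1} it remains only to check that $\lim_{t\to 0^{+}}f^\star(t)=f^\star(0)$, and this is precisely the standing hypothesis that $f^\star(0):=\lim_{n\to-\infty}f^\star(q^{n})$ exists, with $f^\star(0)$ defined to be that limit. Hence $f^\star:\overline{q^\mathbb{Z}}\to\mathbb{X}$ is (strongly) continuous.

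Next, fix an arbitrary sequence $\{s'_{n}\}\subset\mathbb{Z}$. I would apply the almost automorphy of $f$ to the sign-reversed sequence $\{-s'_{n}\}\subset\mathbb{Z}$, obtaining a subsequence $\{s_{n}\}\subset\{s'_{n}\}$ for which
\[
g(t):=\lim_{n\to\infty}f(tq^{-s_{n}})
\]
is well defined for every $t\in\overline{q^\mathbb{Z}}$ and $\lim_{n\to\infty}g(tq^{s_{n}})=f(t)$ for every such $t$. I then set $g^\star(t):=g(t^{-1})$ for $t\in\overline{q^\mathbb{Z}}\setminus\{0\}$ and $g^\star(0):=f^\star(0)$.

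Finally I would verify the two defining limits for $f^\star$ along $\{s_{n}\}$. For $t\neq0$ the identity $f^\star(tq^{s_{n}})=f\big((tq^{s_{n}})^{-1}\big)=f(t^{-1}q^{-s_{n}})$ gives $\lim_{n\to\infty}f^\star(tq^{s_{n}})=g(t^{-1})=g^\star(t)$, while $g^\star(tq^{-s_{n}})=g(t^{-1}q^{s_{n}})\to f(t^{-1})=f^\star(t)$; for $t=0$ both limits are constant in $n$ and equal $f^\star(0)$, so they hold trivially. This establishes that $f^\star$ is almost automorphic. The main obstacle is organizational rather than analytic: one must track the point $0$ separately throughout, since it is the unique non-isolated point, inversion does not reach it, and its treatment relies entirely on the assumed one-sided limit, whereas on the isolated part the entire argument is just the bookkeeping of the sign flip $s_{n}\mapsto -s_{n}$ induced by inversion.
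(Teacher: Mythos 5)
Your proof is correct and follows essentially the same route as the paper's: inversion converts the shift $t\mapsto tq^{s_n}$ into $t^{-1}\mapsto t^{-1}q^{-s_n}$, so the almost automorphy of $f^{\star}$ reduces to that of $f$ with the sign of the exponents reversed, the hypothesis on $f^{\star}(0)$ supplying continuity at the one right-dense point. You are in fact slightly more careful than the paper on two points: by applying the definition to the pre-negated sequence $\{-s'_n\}$ you ensure the sequence you finally exhibit is genuinely a subsequence of the given $\{s'_n\}$ (the paper's $\sigma_n=-s_n$ is a subsequence of $\{-s'_n\}$, not of $\{s'_n\}$), and you treat the point $t=0$, where inversion is undefined, explicitly by setting $g^{\star}(0):=f^{\star}(0)$.
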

\begin{proof}
For any given sequence $\{s'_n\}\subset \mathbb{Z}$,  there exists a subsequence $\{s_{n}\}$ of $\{s'_n\}$ such that
\[
\lim\limits_{n\rightarrow\infty}f(tq^{s_{n}})=g(t)
\]
is well defined for each $t\in \overline{q^\mathbb{Z}}$ and
\[
\lim\limits_{n\rightarrow\infty}g(tq^{-s_{n}})=f(t)
\]
for each $t\in \overline{q^\mathbb{Z}}$.

Define a function $g^{\star}(t)\equiv g(t^{-1}), t\in \overline{q^\mathbb{Z}}$ and set $\sigma_{n}=-s_{n}, n=1,2,\ldots$, we get
\begin{eqnarray*}
\lim\limits_{n\rightarrow\infty}f^{\star}(tq^{\sigma_{n}})&=&
\lim\limits_{n\rightarrow\infty}f(t^{-1}q^{-\sigma_{n}})\\
&=&\lim\limits_{n\rightarrow\infty}f(t^{-1}q^{s_{n}})\\
&=&g(t^{-1})\\
&=&g^{\star}(t)
\end{eqnarray*}
and
\begin{eqnarray*}
\lim\limits_{n\rightarrow\infty}g^{\star}(tq^{-\sigma_{n}})&=&
\lim\limits_{n\rightarrow\infty}g(t^{-1}q^{\sigma_{n}})\\
&=&\lim\limits_{n\rightarrow\infty}g(t^{-1}q^{-s_{n}})\\
&=&f(t^{-1})\\
&=&f^{\star}(t)
\end{eqnarray*}
pointwise on $\overline{q^\mathbb{Z}}$.
Since $f^{\star}(0)=\lim\limits_{n\rightarrow -\infty}f^{\star}(q^n)$ exists, $f^{\star}: \overline{q^\mathbb{Z}}\rightarrow \mathbb{X}$ is well defined and continuous.
Thus, $f^{\star}(t)$ is almost automorphic. The proof is complete.
\end{proof}

\begin{theorem}\label{t2.3}
Let $\mathbb{X}$ and $\mathbb{Y}$ be two Banach spaces and $f:\overline{q^\mathbb{Z}}\rightarrow \mathbb{X}$ an almost automorphic function. If $\phi:\mathbb{X}\rightarrow \mathbb{Y}$ is a continuous function, then the composite function $\phi(f):\overline{q^\mathbb{Z}}\rightarrow \mathbb{Y}$ is almost automorphic.
\end{theorem}
\begin{proof}Since $f$ is almost automorphic,
for any sequence $\{s'_n\}\subset \mathbb{Z}$, we can extract a subsequence $\{s_{n}\}$ of $\{s'_n\}$ such that
\[
\lim\limits_{n\rightarrow\infty}f(tq^{s_{n}})=g(t)
\]
is well defined for each $t\in \overline{q^\mathbb{Z}}$ and
\[
\lim\limits_{n\rightarrow\infty}g(tq^{-s_{n}})=f(t)
\]
for each $t\in \overline{q^\mathbb{Z}}$.

Since $\phi(f):\overline{q^\mathbb{Z}}\rightarrow \mathbb{Y}$ is continuous, we have
\[
\lim\limits_{n\rightarrow\infty}\varphi(f(tq^{s_{n}}))=\varphi\left(\lim\limits_{n\rightarrow\infty}f(tq^{s_{n}})\right)=\varphi(g(t))
\]
is well defined for each $t\in \overline{q^\mathbb{Z}}$ and
\[
\lim\limits_{n\rightarrow\infty}\varphi(g(tq^{-s_{n}}))=\varphi\left(\lim\limits_{n\rightarrow\infty}g(tq^{-s_{n}})\right)=\varphi(f(t))
\]
for each $t\in \overline{q^\mathbb{Z}}$.

That is, the composite function $\phi(f):\overline{q^\mathbb{Z}}\rightarrow \mathbb{Y}$ is almost automorphic. The proof is complete.
\end{proof}

\begin{corollary}\label{c2.1}
If $A$ is a bounded linear operator in $\mathbb{X}$ and $f:\overline{q^{\mathbb{Z}}}\rightarrow \mathbb{X}$ an almost aytomorphic function, then $A(f)(t)$ is also almost automophic.
\end{corollary}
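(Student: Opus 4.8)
The plan is to deduce this corollary as an immediate special case of Theorem \ref{t2.3}. The key observation is a standard fact from functional analysis: a linear operator $A:\mathbb{X}\rightarrow\mathbb{X}$ between normed spaces is continuous if and only if it is bounded, that is, if and only if $\|Ax\|\leq M\|x\|$ for some constant $M\geq 0$ and all $x\in\mathbb{X}$. Since $A$ is assumed bounded, it is in particular a continuous map from $\mathbb{X}$ into $\mathbb{X}$.

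Once continuity of $A$ is established, the corollary follows by applying Theorem \ref{t2.3} with the choices $\mathbb{Y}=\mathbb{X}$ and $\phi=A$. Indeed, $f:\overline{q^\mathbb{Z}}\rightarrow\mathbb{X}$ is almost automorphic by hypothesis, and $A:\mathbb{X}\rightarrow\mathbb{X}$ is a continuous function, so the composite $A(f)=\phi(f):\overline{q^\mathbb{Z}}\rightarrow\mathbb{X}$ is almost automorphic. The definition of almost automorphy is preserved because for any sequence $\{s'_n\}\subset\mathbb{Z}$ one extracts, via the almost automorphy of $f$, a subsequence $\{s_n\}$ along which $f(tq^{s_n})\to g(t)$ and $g(tq^{-s_n})\to f(t)$ pointwise; the continuity (hence linearity and boundedness) of $A$ then lets one pass $A$ through both limits to obtain $A(f(tq^{s_n}))\to A(g(t))$ and $A(g(tq^{-s_n}))\to A(f(t))$, exactly as in the argument carried out for $\varphi$ in the proof of Theorem \ref{t2.3}.

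I do not anticipate any genuine obstacle here, since the corollary is essentially a restatement of Theorem \ref{t2.3} under the additional structural hypothesis that $\phi$ is linear and bounded rather than merely continuous. The only point worth making explicit is the boundedness–continuity equivalence, which justifies invoking the theorem; beyond that, no further estimates or constructions are needed.
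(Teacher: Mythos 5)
Your proposal is correct and matches the paper's intent: the paper's proof is simply ``Obvious,'' with the corollary placed directly after Theorem \ref{t2.3} precisely so that it follows by taking $\phi=A$ there, and your explicit remark that boundedness of a linear operator is equivalent to continuity is exactly the justification needed to invoke that theorem.
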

\begin{proof}
Obvious.
\end{proof}
\begin{theorem}\label{t2.4}
 Let $f$ be almost automorphic. If $f(q^n)=0$ for all $n>n_0$ for some integer number $n_0$, then $f(t)\equiv0$ for all $t\in \overline{q^\mathbb{Z}}$.
 \end{theorem}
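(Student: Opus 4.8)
The plan is to exploit the automorphy structure by shifting far into the region where $f$ is already known to vanish. First I would apply Definition \ref{d2.1} to the particular sequence $s'_n=n$. This produces a subsequence $\{s_n\}\subset\mathbb{Z}$, which necessarily satisfies $s_n\to+\infty$ since it is extracted from a sequence diverging to $+\infty$, together with a limit function $g(t)=\lim_{n\to\infty}f(tq^{s_n})$ defined on all of $\overline{q^\mathbb{Z}}$ and the reconstruction identity $\lim_{n\to\infty}g(tq^{-s_n})=f(t)$ for each $t\in\overline{q^\mathbb{Z}}$.

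The crucial observation is that $g$ vanishes on the entire lattice $q^\mathbb{Z}$, not merely on a ``half-line.'' Indeed, fix an arbitrary $m\in\mathbb{Z}$ and evaluate at $t=q^m$: then $f(q^mq^{s_n})=f(q^{m+s_n})$. Because $s_n\to+\infty$, we have $m+s_n>n_0$ for all large $n$, so $f(q^{m+s_n})=0$ by hypothesis, whence $g(q^m)=\lim_{n\to\infty}f(q^{m+s_n})=0$. Since $m$ was arbitrary, $g(q^m)=0$ for every $m\in\mathbb{Z}$.

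Next I would feed this back through the reconstruction identity to recover $f$. For any fixed $m\in\mathbb{Z}$ and any index $n$, the argument $q^mq^{-s_n}=q^{m-s_n}$ is again a point of $q^\mathbb{Z}$, so $g(q^{m-s_n})=0$ by the previous step; passing to the limit gives $f(q^m)=\lim_{n\to\infty}g(q^{m-s_n})=0$. Thus $f$ vanishes at every isolated point $q^m$, $m\in\mathbb{Z}$. To dispose of the unique right-dense point $0$, I would invoke Remark \ref{r1}, according to which continuity of $f$ means $f(0)=\lim_{t\to 0^+}f(t)$; since $q^m\to 0^+$ as $m\to-\infty$ and $f(q^m)=0$ along this sequence, it follows that $f(0)=0$. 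Hence $f(t)\equiv 0$ on $\overline{q^\mathbb{Z}}$.

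There is no genuine analytic obstacle here; the whole force of the argument is the single clean idea that the forward shift $t\mapsto tq^{s_n}$ with $s_n\to+\infty$ carries every lattice point into the vanishing region, forcing $g\equiv 0$ on $q^\mathbb{Z}$, after which the automorphic reconstruction transports this vanishing back to $f$. The only points requiring care are to guarantee $s_n\to+\infty$ by choosing the initial sequence $s'_n=n$, and to note that the reconstructed arguments $q^{m-s_n}$ remain within $q^\mathbb{Z}$ (never reaching $0$) for every finite $n$, so that the already-established vanishing of $g$ on $q^\mathbb{Z}$ suffices and no value $g(0)$ is needed.
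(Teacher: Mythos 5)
Your argument is correct and is essentially the paper's own proof: both apply the definition to the sequence $s'_n=n$, use the divergence of the extracted subsequence to push every lattice point into the region where $f$ vanishes (forcing $g\equiv 0$ on $q^{\mathbb{Z}}$), transport this back to $f$ via the reconstruction identity, and finish at $t=0$ by continuity. Your write-up is in fact slightly cleaner in making explicit that $s_n\to+\infty$ and that the reconstructed arguments stay in $q^{\mathbb{Z}}$.
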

 \begin{proof}
 It suffices to prove that $f(t)=0$ for $t\leq q^{n_0}$. Since $f$ is almost automorphic, for
  the sequence of natural numbers $\mathbb{N}=\{n\}$, one can extract   a subsequence $\{n_{k}\}\subset \mathbb{N}$ such that

 \begin{equation}\label{p1}
   \lim\limits_{k\rightarrow\infty}f(tq^{n_{k}})=g(t), \,\,\,  \mathrm{for} \,\,\mathrm{ each} \,\, t\in \overline{q^\mathbb{Z}}\setminus\{0\}
\end{equation}
 and
 \begin{equation}\label{p2}
   \lim\limits_{k\rightarrow\infty}g(tq^{-n_{k}})=f(t), \,\,\,  \mathrm{for}\,\, \mathrm{each} \,\, t\in \overline{q^\mathbb{Z}}\setminus\{0\}.
 \end{equation}
It is clear that for any $t\leq q^{n_0}$, we can find $\{n_{kj}\}\subset\{n_{k}\}$ with $ tq^{n_{kj}}>q^{n_0}$ for all  $j=1,2,\ldots$. Thus, $f(tq^{n_{kj}})=0$ for all $j=1,2,\ldots$. By \eqref{p1}, $g(t)=\lim\limits_{j\rightarrow\infty}f(tq^{n_{kj}})=0$  for $ t\in \overline{q^\mathbb{Z}}\setminus\{0\}$. Hence, according to \eqref{p2}, we obtain $f(t)=0$ for $ t\in \overline{q^\mathbb{Z}}\setminus\{0\}$.
Since $f$ is continuous at $t=0$, $0=\lim\limits_{n\rightarrow -\infty}f(q^{n})=f(0).$ Therefore,  $f(t)=0$ for $ t\in \overline{q^\mathbb{Z}}$. The proof is complete.
\end{proof}

\begin{theorem}\label{t2.5}
Let $\{f_{n}\}$ be a sequence of almost automorphic functions such that $\lim\limits_{n\rightarrow\infty}f_{n}(t)=f(t)$ uniformly in $t\in \overline{q^\mathbb{Z}}$.
Then $f$ is almost automorphic.
\end{theorem}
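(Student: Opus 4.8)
The plan is to combine a Cantor diagonal extraction with the classical three-epsilon argument. First I would record that $f$ is continuous: each $f_n$ is continuous on $\overline{q^\mathbb{Z}}$, and by Remark \ref{r1} continuity is equivalent to $\lim_{t\to 0^+}f_n(t)=f_n(0)$. Splitting $\|f(t)-f(0)\|\le\|f(t)-f_n(t)\|+\|f_n(t)-f_n(0)\|+\|f_n(0)-f(0)\|$ and using uniform convergence shows $\lim_{t\to 0^+}f(t)=f(0)$, so $f$ satisfies the continuity requirement of Definition \ref{d2.1}. (At all other points of $\overline{q^\mathbb{Z}}$, continuity is automatic since they are isolated.)

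Next, fix an arbitrary sequence $\{s'_n\}\subset\mathbb{Z}$. I would extract one subsequence that works simultaneously for every $f_k$ by the diagonal procedure: using almost automorphy of $f_1$, pass to a subsequence along which $f_1(tq^{\cdot})$ converges pointwise in $t$ to some $g_1$ with reverse limit returning $f_1$; from that sequence extract a further one handling $f_2$, and so on; then take the diagonal $\{s_n\}$. For each fixed $k$ the tail of $\{s_n\}$ is a subsequence of the $k$-th sequence, so $g_k(t):=\lim_n f_k(tq^{s_n})$ exists for every $t\in\overline{q^\mathbb{Z}}$ and $\lim_n g_k(tq^{-s_n})=f_k(t)$.

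The crucial observation is the uniform estimate $\|g_k(t)-g_l(t)\|\le\sup_{s\in\overline{q^\mathbb{Z}}}\|f_k(s)-f_l(s)\|$, valid for all $t$, obtained by letting $n\to\infty$ in $\|f_k(tq^{s_n})-f_l(tq^{s_n})\|$. Since $\{f_k\}$ is uniformly Cauchy (it converges uniformly to $f$), the sequence $\{g_k\}$ is uniformly Cauchy, hence converges uniformly to a function $g$, giving in particular $\|g_k-g\|_\infty\le\|f_k-f\|_\infty$. I would then run a three-epsilon argument on $\|f(tq^{s_n})-g(t)\|\le\|f(tq^{s_n})-f_k(tq^{s_n})\|+\|f_k(tq^{s_n})-g_k(t)\|+\|g_k(t)-g(t)\|$: the first and third terms are controlled uniformly in $n$ by taking $k$ large, and the middle term tends to $0$ as $n\to\infty$ for fixed $k$. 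This yields $g(t)=\lim_n f(tq^{s_n})$ for each $t$. A symmetric splitting of $\|g(tq^{-s_n})-f(t)\|$ via $\|g(tq^{-s_n})-g_k(tq^{-s_n})\|+\|g_k(tq^{-s_n})-f_k(t)\|+\|f_k(t)-f(t)\|$ gives $\lim_n g(tq^{-s_n})=f(t)$.

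The main obstacle is making both three-epsilon estimates uniform in $n$, and this is exactly what the uniform bounds $\|g_k-g_l\|_\infty\le\|f_k-f_l\|_\infty$ and $\|g_k-g\|_\infty\le\|f_k-f\|_\infty$ supply; once they are in hand the remaining steps are routine, and the case $t=0$ is trivial since $0\cdot q^{\pm s_n}=0$. No continuity of $g$ needs to be verified, since Definition \ref{d2.1} only requires $g$ to be well defined pointwise together with the reverse limit.
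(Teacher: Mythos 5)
Your proposal is correct and follows essentially the same route as the paper: a Cantor diagonal extraction of a single subsequence working for all $f_k$, the observation that $\{g_k\}$ inherits the Cauchy property from $\{f_k\}$, and a three-epsilon argument for both the forward and reverse limits. If anything, your version is slightly more careful than the paper's, since you explicitly verify the continuity of $f$ at $0$ and make the uniform bound $\|g_k-g_l\|_\infty\le\|f_k-f_l\|_\infty$ explicit by passing to the limit in $n$.
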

\begin{proof}
For any given sequence $\{s'_n\}\subset \mathbb{Z}$, by the diagonal procedure one can extract a subsequence $\{s_{n}\}$ of $\{s'_n\}$ such that
\begin{equation}\label{ee21}
\lim\limits_{n\rightarrow\infty}f_{i}(tq^{s_{n}})=g_i(t)
\end{equation}
for each $i=1,2,...$ and each $t\in \overline{q^\mathbb{Z}}$.

We claim that the sequence of function $\{g_{i}(t)\}$ is a Cauchy sequence. In fact, for any $i,j\in \mathbb{N},$ we have
\begin{eqnarray*}
g_{i}(t)-g_{j}(t)&=&g_{i}(t)-f_{i}(tq^{s_{n}})+f_{i}(tq^{s_{n}})-f_{j}(tq^{s_{n}})\\
&&+f_{j}(tq^{s_{n}})-g_{j}(t),
\end{eqnarray*}
hence,
\begin{eqnarray*}
\|g_{i}(t)-g_{j}(t)\|&\leq&\|g_{i}(t)-f_{i}(tq^{s_{n}})\|+\|f_{i}(tq^{s_{n}})-f_{j}(tq^{s_{n}})\|\\
&&+\|f_{j}(tq^{s_{n}})-g_{j}(t)\|.
\end{eqnarray*}
For each $\varepsilon>0$, from the uniform convergence of $\{f_{n}\}$, there exists a positive integer $N(\varepsilon)$ such that for all $i,j>N$,
\[
\|f_{i}(tq^{s_{n}})-f_{j}(tq^{s_{n}})\|<\varepsilon,
\]
for all $t\in \overline{q^\mathbb{Z}}$, and all $n=1,2,\ldots$.

It follows from \eqref{ee21} and the completeness of the space $\mathbb{X}$ that the sequence $\{g_{i}(t)\}$ converges  pointwisely on $\overline{q^\mathbb{Z}}$ to a function, say to   function $g(t)$.

Now, we will prove
\[
\lim\limits_{n\rightarrow\infty}f(tq^{s_{n}})=g(t)
\]
and
\[
\lim\limits_{n\rightarrow\infty}g(tq^{-s_{n}})=f(t)
\]
pointwise on $\overline{q^\mathbb{Z}}$.

Indeed for each $i=1,2,\ldots$, we have
\begin{eqnarray}\label{p3}
\|f(tq^{s_{n}})-g(t)\|&\leq&\|f(tq^{s_{n}})-f_{i}(tq^{s_{n}})\|\nonumber\\
&&+\|f_{i}(tq^{s_{n}})-g_{i}(t)\|+\|g_{i}(t)-g(t)\|.
\end{eqnarray}
For any $\varepsilon>0$, we can find some positive integer $N_0(t,\varepsilon)$ such that
\[
\|f(tq^{s_{n}})-f_{N_0}(tq^{s_{n}})\|<\varepsilon
\]
for every $t\in \overline{q^\mathbb{Z}}, n=1,2,\ldots$, and $\|g_{N_0}(t)-g(t)\|<\varepsilon$ for every $t\in \overline{q^\mathbb{Z}}$. Hence, by \eqref{p3}, we get
\begin{equation}\label{p4}
  \|f(tq^{s_{n}})-g(t)\|<2\varepsilon+\|f_{N_0}(tq^{s_{n}})-g_{N_0}(t)\|
\end{equation}
for every $t\in \overline{q^\mathbb{Z}}, n=1,2,\ldots$.

In view of \eqref{ee21}, for every $t\in \overline{q^\mathbb{Z}}$, there is some positive integer $M=M(t,N_0)$ such that
\[
\|f_{N_0}(tq^{s_{n}})-g_{N_0}(t)\|<\varepsilon
\]
for every $n>M$. From this and \eqref{p4}, we obtain
\[
\|f(tq^{s_{n}})-g(t)\|<3\varepsilon
\]
for $n\geq N_{0}(t,\varepsilon)$.

Similarly, we can prove that
\[
\lim\limits_{n\rightarrow\infty}g(tq^{s_{n}})=f(t)\,\, \mathrm{for} \,\ \mathrm{each} \,\ t\in \overline{q^\mathbb{Z}}.
\]
The proof is complete.
\end{proof}

\begin{remark}If we
denote by $AA(\mathbb{X})$, the set of all  almost automorphic functions $f:\overline{q^\mathbb{Z}}\rightarrow \mathbb{X}$, then by Theorem \ref{t21}, we see that $AA(\mathbb{X})$ is a vector space, and
according to Theorem \ref{t2.5}, this vector space
equipped with the norm
$$\|f\|_{AA(\mathbb{X})}=\sup\limits_{t\in\overline{ q^\mathbb{Z}}}\|f(t)\|$$
is a Banach space.
\end{remark}

\begin{definition}\label{d23}
A continuous function $f:\mathbb{R}\times \mathbb{X} \rightarrow \mathbb{X}$ is said to be almost automorphic in  $t \in \overline{q^\mathbb{Z}}$ for each $x \in \mathbb{X}$,
if for each sequence of integer numbers $\{s_{n}^{'}\}$, there exists a subsequence $\{s_{n}\}$ such that
\[
\lim\limits_{n\rightarrow\infty}f(tq^{s_{n}},x)=g(t,x)\]
 exists for each $t \in \overline{q^\mathbb{Z}}$ and each  $x \in \mathbb{X}$, and
 \[
\lim\limits_{n\rightarrow\infty}g(tq^{-s_{n}},x)=f(t,x)
\]
 exists for each $t \in \overline{q^\mathbb{Z}}$ and each  $x \in \mathbb{X}$.
\end{definition}

\begin{theorem}\label{t26}
If $f_{1},f_{2}: \overline{q^\mathbb{Z}}\times \mathbb{X} \rightarrow \mathbb{X}$ are almost automorphic functions  in $t$ for each  $x \in \mathbb{X}$, then the following functions are also  almost automorphic in $t$ for each  $x \in \mathbb{X}$:
\begin{itemize}
  \item [$(i)$] $f_{1}+f_{2}$,
  \item [$(ii)$] $c f_{1}$, $c$ is an arbitrary scalar.
\end{itemize}
\end{theorem}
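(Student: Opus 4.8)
The plan is to mirror the arguments for parts $(i)$ and $(ii)$ of Theorem \ref{t21}, but now carrying the extra variable $x$ along and, crucially, arranging for a single subsequence to serve both $f_1$ and $f_2$ simultaneously. For part $(i)$, I would start from an arbitrary sequence $\{s'_n\}\subset\mathbb{Z}$. Since $f_1$ is almost automorphic in $t$ for each $x$, I would first extract a subsequence $\{s''_n\}$ of $\{s'_n\}$ along which both
\[
\lim_{n\to\infty} f_1(tq^{s''_n},x) = g_1(t,x) \quad \text{and} \quad \lim_{n\to\infty} g_1(tq^{-s''_n},x) = f_1(t,x)
\]
hold for every $t\in\overline{q^\mathbb{Z}}$ and every $x\in\mathbb{X}$. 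Then, applying the almost automorphy of $f_2$ to the sequence $\{s''_n\}$, I would extract a further subsequence $\{s_n\}$ of $\{s''_n\}$ along which the analogous pair of limits holds for $f_2$, with limit function $g_2$. Because $\{s_n\}$ is a subsequence of $\{s''_n\}$, the two limits already established for $f_1$ persist along $\{s_n\}$. Setting $g:=g_1+g_2$, the linearity of limits in $\mathbb{X}$ gives $\lim_{n\to\infty}(f_1+f_2)(tq^{s_n},x)=g(t,x)$ and $\lim_{n\to\infty}g(tq^{-s_n},x)=(f_1+f_2)(t,x)$ pointwise, which is exactly what Definition \ref{d23} requires.

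Part $(ii)$ is even more direct: given $\{s'_n\}$, I extract a single subsequence $\{s_n\}$ from the almost automorphy of $f_1$, obtaining a limit function $g_1$, and then observe that $\lim_{n\to\infty}cf_1(tq^{s_n},x)=cg_1(t,x)$ and $\lim_{n\to\infty}cg_1(tq^{-s_n},x)=cf_1(t,x)$, since scalar multiplication commutes with limits in the Banach space $\mathbb{X}$. Thus $cf_1$ satisfies Definition \ref{d23} with limit function $cg_1$.

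The only point requiring care — and the step I would flag as the main (mild) obstacle — is the successive extraction of subsequences in part $(i)$. A naive approach that extracts a subsequence for $f_1$ and, independently, a subsequence for $f_2$ would not produce a common index sequence, so the two limit functions could not be combined. The fix is precisely the nested extraction described above: extracting the $f_2$-subsequence from within the $f_1$-subsequence, so that one sequence $\{s_n\}$ simultaneously realizes all four limits. Once this diagonal-type selection is in place, everything else follows from the additivity and scalar-homogeneity of limits, with no further analytic input needed.
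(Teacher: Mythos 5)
Your proof is correct and is exactly the standard argument the paper has in mind: the paper itself omits the proof of Theorem \ref{t26} as ``obvious,'' and your nested subsequence extraction for $(i)$ (together with the direct limit computation for $(ii)$) is the right way to fill in that omission. The point you flag --- that the subsequence for $f_2$ must be extracted from within the one already chosen for $f_1$ so that a single index sequence realizes all four limits --- is indeed the only step requiring any care.
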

\begin{proof}  The proof is obvious. We omit it here. The proof is complete.
\end{proof}

\begin{theorem}\label{t27}
If $f(t,x) $ are almost automorphic in $t$ for each $x\in \mathbb{X}$, then
$$\sup\limits_{t\in \overline{q^\mathbb{Z}}}\|f(t,x)\|=M_{x}<\infty $$
for each $x\in \mathbb{X}$.
\end{theorem}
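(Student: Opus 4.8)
The plan is to imitate the proof of part $(iv)$ of Theorem~\ref{t21}, carrying out the argument separately for each fixed second argument $x$. So I would fix an arbitrary $x\in\mathbb{X}$ and proceed by contradiction, supposing that $\sup_{t\in\overline{q^\mathbb{Z}}}\|f(t,x)\|=\infty$. Since $\overline{q^\mathbb{Z}}=\{q^n:n\in\mathbb{Z}\}\cup\{0\}$ and $f(\cdot,x)$ is continuous, the value $f(0,x)$ is a fixed finite vector; hence the supposed unboundedness can only be produced along the isolated points, and there must exist a sequence $\{s_n'\}\subset\mathbb{Z}$ with $\lim_{n\to\infty}\|f(q^{s_n'},x)\|=\infty$.

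Next I would invoke almost automorphy in $t$ (Definition~\ref{d23}) applied to the sequence $\{s_n'\}$ and the fixed $x$: there is a subsequence $\{s_n\}\subset\{s_n'\}$ such that $\lim_{n\to\infty}f(tq^{s_n},x)=g(t,x)$ exists for every $t\in\overline{q^\mathbb{Z}}$. Taking $t=1$ yields $\lim_{n\to\infty}f(q^{s_n},x)=g(1,x)$, so $\lim_{n\to\infty}\|f(q^{s_n},x)\|=\|g(1,x)\|<\infty$. Since $\{s_n\}$ is a subsequence of $\{s_n'\}$, this contradicts $\lim_{n\to\infty}\|f(q^{s_n'},x)\|=\infty$, and the contradiction forces $M_x:=\sup_{t\in\overline{q^\mathbb{Z}}}\|f(t,x)\|<\infty$. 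As $x$ was arbitrary, the claim follows for each $x\in\mathbb{X}$.

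There is essentially no serious obstacle here; the argument is a pointwise-in-$x$ copy of the scalar bound in Theorem~\ref{t21}$(iv)$. The only point requiring a little care is the treatment of the unique right-dense point $t=0$: one must ensure the unboundedness is extracted along the isolated points $q^n$ rather than spuriously attributed to $t=0$, which is justified by the continuity of $f(\cdot,x)$ (equivalently, by Remark~\ref{r1}, which guarantees $f(0,x)=\lim_{t\to 0^+}f(t,x)$ is finite). Note also that the bound $M_x$ is permitted to depend on $x$, so no uniformity in $x$ is claimed or needed, and the proof therefore does not require the joint continuity of $f$ in $(t,x)$ beyond what is used to make $f(0,x)$ well defined.
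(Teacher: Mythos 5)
Your proposal is correct and follows essentially the same route as the paper: assume unboundedness at some fixed $x$, extract a divergent sequence $\|f(q^{s_n'},x)\|\to\infty$, pass to the almost automorphic subsequence, and derive a contradiction from the convergence of $f(q^{s_n},x)$ to $g(1,x)$. Your additional remark about isolating the blow-up away from the right-dense point $t=0$ is a harmless refinement the paper leaves implicit.
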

\begin{proof} Suppose not. Assume, to the contrary, that
$$\sup\limits_{t\in \overline{q^\mathbb{Z}}}\|f(t,x_{0})\|=\infty $$
for some $x_{0}\in \mathbb{X}$. Thus, there exists a  sequence of integer numbers $\{s_{n}^{'}\}$ such that
$$\lim\limits_{n\rightarrow\infty}\|f(q^{s_{n}^{'}},x_{0})\|=\infty. $$
Since $f(t,x_{0})$ is almost automorphic in $t$, one can extract a subsequence $\{s_{n}\}$ from $\{s_{n}^{'}\}$ such that
$$\sup\limits_{t\in \overline{q^\mathbb{Z}}}\|f(q^{s_{n}},x_{0})\|=g(1,x_{0}),$$
which is a contradiction. The proof is complete.
\end{proof}

\begin{theorem}\label{t2.2.4}
If $f $ is  almost  automorphic   in $t$ for each  $x \in \mathbb{X}$, then the function $g$ in Definition \ref{d23} satisfies
$$\sup\limits_{t\in\mathbb{R}}\|g(t,x)\|=N_{x}<\infty$$
for each $x\in \mathbb{X}$.
\end{theorem}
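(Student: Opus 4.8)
The plan is to reduce the assertion to the boundedness of $f$ already established in Theorem \ref{t27}, exactly mirroring the single-variable estimate recorded in Remark \ref{p1}. First I would fix an arbitrary $x\in\mathbb{X}$ and apply Theorem \ref{t27} to obtain the finite constant
\[
M_x=\sup_{t\in\overline{q^\mathbb{Z}}}\|f(t,x)\|<\infty.
\]
By Definition \ref{d23}, the function $g$ is produced from some sequence $\{s'_n\}\subset\mathbb{Z}$ by passing to a subsequence $\{s_n\}$ for which $g(t,x)=\lim_{n\to\infty}f(tq^{s_n},x)$ for every $t\in\overline{q^\mathbb{Z}}$ and every $x\in\mathbb{X}$; I would take this pointwise representation as the starting point.

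The crucial observation is that for each fixed $t\in\overline{q^\mathbb{Z}}$ and each index $n$ the shifted argument $tq^{s_n}$ again lies in $\overline{q^\mathbb{Z}}$: if $t=q^m$ then $tq^{s_n}=q^{m+s_n}$, while if $t=0$ then $tq^{s_n}=0$. Consequently $\|f(tq^{s_n},x)\|\le M_x$ for all $n$. Passing to the limit and using continuity of the norm gives
\[
\|g(t,x)\|=\lim_{n\to\infty}\|f(tq^{s_n},x)\|\le M_x
\]
for every $t\in\overline{q^\mathbb{Z}}$. Taking the supremum over $t$ then yields $N_x=\sup_{t\in\overline{q^\mathbb{Z}}}\|g(t,x)\|\le M_x<\infty$, which is precisely the claim.

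I do not expect any genuine obstacle here, since the estimate is uniform in $t$ and therefore insensitive to which particular subsequence $\{s_n\}$ happens to define $g$. The only point worth stating carefully is that the bound $M_x$ is the \emph{full} supremum over $\overline{q^\mathbb{Z}}$, so that translating the time argument by the integer powers $q^{s_n}$ cannot escape it; this is exactly what allows the limit function $g(\cdot,x)$ to inherit the bound of $f(\cdot,x)$, just as in Remark \ref{p1} for the single-variable setting.
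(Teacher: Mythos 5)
Your argument is correct and is precisely the routine estimate the paper has in mind: the paper simply declares this proof ``obvious'' and omits it entirely. Your reduction to Theorem \ref{t27}, using that $tq^{s_n}$ stays in $\overline{q^\mathbb{Z}}$ so the pointwise limit $g(\cdot,x)$ inherits the bound $M_x$ by continuity of the norm (exactly as in Remark \ref{p1} for the one-variable case), is the standard way to fill in that omission.
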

\begin{proof}
The proof is obvious. We omit it here. The proof is complete.
\end{proof}

\begin{theorem}\label{t28}
If $f$ is almost automorphic in $t$ for each  $x \in \mathbb{X}$ and if $f$ is Lipschitzian in  $x$ uniformly in $t$, that is, there exists a positive constant $L>0$ such that for each pair $x,y\in \mathbb{X}$,
$$\|f(t,x)-f(t,y)\|<L\|x-y\| $$
uniformly in $t\in \overline{q^\mathbb{Z}}$, then $g$ satisfies the same Lipschitz condition in $x$ uniformly in  $t$.
\end{theorem}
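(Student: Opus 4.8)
The plan is to pass the Lipschitz inequality through the pointwise limit that defines $g$, exploiting the fact that a single subsequence in Definition \ref{d23} realizes the limit simultaneously for every argument $x\in\mathbb{X}$.

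First I would fix an arbitrary sequence $\{s_n'\}\subset\mathbb{Z}$ and, invoking the almost automorphy of $f$ in $t$, extract the subsequence $\{s_n\}$ for which
\[
\lim\limits_{n\rightarrow\infty}f(tq^{s_n},x)=g(t,x)
\]
holds for every $t\in\overline{q^\mathbb{Z}}$ and every $x\in\mathbb{X}$. The crucial structural feature I would use is that this one subsequence works uniformly across all $x$; in particular, for any fixed pair $x,y\in\mathbb{X}$ and any fixed $t\in\overline{q^\mathbb{Z}}$, both $f(tq^{s_n},x)\rightarrow g(t,x)$ and $f(tq^{s_n},y)\rightarrow g(t,y)$ along the very same indices $n$.

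Next, for each $n$ I would observe that $tq^{s_n}\in\overline{q^\mathbb{Z}}$, so the hypothesized Lipschitz bound applies at the point $tq^{s_n}$, giving
\[
\|f(tq^{s_n},x)-f(tq^{s_n},y)\|\leq L\|x-y\|.
\]
Letting $n\rightarrow\infty$ and using continuity of the norm on $\mathbb{X}$ together with the two simultaneous limits, I would conclude
\[
\|g(t,x)-g(t,y)\|=\lim\limits_{n\rightarrow\infty}\|f(tq^{s_n},x)-f(tq^{s_n},y)\|\leq L\|x-y\|.
\]
Since $t\in\overline{q^\mathbb{Z}}$ was arbitrary, the estimate is uniform in $t$, which is precisely the assertion that $g$ obeys the same Lipschitz condition in $x$ uniformly in $t$.

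There is no genuine obstacle here; the argument is a routine limit-passage, and the only point meriting a word of care is that the strict inequality $<$ appearing in the hypothesis degenerates to the non-strict $\leq$ under the limit. This is harmless, since the Lipschitz constant $L$ is what is preserved and a Lipschitz condition is fundamentally a non-strict estimate. For completeness I would also remark that the conclusion holds for whichever limit function $g$ arises from the chosen subsequence, matching the quantifier structure of Definition \ref{d23}.
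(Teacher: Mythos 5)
Your proposal is correct and follows essentially the same route as the paper: both extract the subsequence from Definition \ref{d23}, apply the Lipschitz bound at the shifted points $tq^{s_n}$, and pass to the limit (the paper phrases the limit passage via an explicit $\varepsilon/2$ triangle-inequality estimate and then lets $\varepsilon\rightarrow 0^+$, whereas you invoke continuity of the norm directly, which is the same argument in substance). Your remark about the strict inequality relaxing to $\leq$ in the limit matches the paper's conclusion as well.
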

\begin{proof}Because for each sequence of integer numbers $\{s_{n}^{'}\}$, there exists a subsequence $\{s_{n}\}$ such that
\[
\lim\limits_{n\rightarrow\infty}f(tq^{s_{n}},x)=g(t,x)\]
 exists for each $t \in \overline{q^\mathbb{Z}}$ and each  $x \in \mathbb{X}$,  so  for any $t\in \overline{q^\mathbb{Z}}$  and any given $\varepsilon> 0$, we have
$$\|g(t,x)-f(tq^{s_{n}},x)\|<\frac{\varepsilon}{2}$$ and
$$\|g(t,y)-f(tq^{s_{n}},y)\|<\frac{\varepsilon}{2}$$
for $n$ sufficiently large.

Hence, for $n$ sufficiently large we find
\begin{eqnarray*}
||g(t,x)-g(t,y)||&=&||g(t,x)-f(tq^{s_{n}},x)+f(tq^{s_{n}},x)-f(tq^{s_{n}},y)\\
&&+f(tq^{s_{n}},y)-g(t,y)||\\
&<&\varepsilon+L\|x-y\|.
\end{eqnarray*}
Letting $\varepsilon\rightarrow 0^+$, we get
$$\|g(t,x)-g(t,y)\|\leq L\|x-y\|$$
for each $x,y\in \mathbb{X}$.
The proof is complete.
\end{proof}

\begin{theorem}\label{t29}
 Let  $f: \overline{q^\mathbb{Z}}\times \mathbb{X}\rightarrow \mathbb{X}$ be almost automorphic  in $t$ for each $x\in \mathbb{X}$ and assume that $f$  satisfies a Lipschitz in $x$ uniformly in  $t\in \overline{q^\mathbb{Z}}$. Let $\varphi: \overline{q^\mathbb{Z}}\rightarrow \mathbb{X}$ be almost automorphyic. Then the function  $F: \overline{q^\mathbb{Z}}\rightarrow \mathbb{X}$ defined by $F(t)=f(t,\varphi(t))$ is almost automorphic.
\end{theorem}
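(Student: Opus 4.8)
The plan is to prove this composition theorem by the standard double-extraction argument, with the crucial input being the Lipschitz transfer already established in Theorem \ref{t28}. Fix an arbitrary sequence $\{s'_n\}\subset\mathbb{Z}$. First I would use the almost automorphy of $\varphi$ to pass to a subsequence along which $\varphi(tq^{s'_n})$ converges pointwise to some $\psi(t)$ with $\psi(tq^{-s'_n})\rightarrow\varphi(t)$; then, keeping this subsequence, I would invoke the almost automorphy of $f$ in $t$ (Definition \ref{d23}) to thin it further to a single subsequence $\{s_n\}$ for which both $f(tq^{s_n},x)\rightarrow g(t,x)$ and $g(tq^{-s_n},x)\rightarrow f(t,x)$ hold for every $t\in\overline{q^\mathbb{Z}}$ and every $x\in\mathbb{X}$, while the two limits for $\varphi$ persist on this thinner sequence. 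The candidate limit function is $G(t):=g(t,\psi(t))$.

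For the forward convergence $F(tq^{s_n})\rightarrow G(t)$, I would fix $t$ and split
\[
\|f(tq^{s_n},\varphi(tq^{s_n}))-g(t,\psi(t))\|\le \|f(tq^{s_n},\varphi(tq^{s_n}))-f(tq^{s_n},\psi(t))\|+\|f(tq^{s_n},\psi(t))-g(t,\psi(t))\|.
\]
The first term is bounded by $L\|\varphi(tq^{s_n})-\psi(t)\|\rightarrow 0$ via the Lipschitz hypothesis on $f$, and the second term tends to $0$ by the pointwise convergence $f(tq^{s_n},x)\rightarrow g(t,x)$ applied at $x=\psi(t)$ for this fixed $t$. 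The reverse convergence $G(tq^{-s_n})\rightarrow F(t)$ is handled symmetrically by splitting
\[
\|g(tq^{-s_n},\psi(tq^{-s_n}))-f(t,\varphi(t))\|\le L\|\psi(tq^{-s_n})-\varphi(t)\|+\|g(tq^{-s_n},\varphi(t))-f(t,\varphi(t))\|,
\]
where the first term now uses that $g$ inherits the \emph{same} Lipschitz constant $L$ (Theorem \ref{t28}) together with $\psi(tq^{-s_n})\rightarrow\varphi(t)$, and the second term vanishes by $g(tq^{-s_n},x)\rightarrow f(t,x)$ at $x=\varphi(t)$. Finally, continuity of $F$ reduces, by Remark \ref{r1}, to verifying $\lim\limits_{t\to 0^+}F(t)=F(0)$, which follows from the joint continuity of $f$ assumed in Definition \ref{d23} and the continuity of $\varphi$.

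The step I expect to be the main obstacle is the reverse limit: in contrast to the forward estimate, here the difference $\|g(tq^{-s_n},\psi(tq^{-s_n}))-g(tq^{-s_n},\varphi(t))\|$ cannot be controlled unless the transferred function $g$ is itself Lipschitz in $x$ with a constant independent of the argument $tq^{-s_n}$. This is exactly what Theorem \ref{t28} supplies, so the whole argument hinges on invoking it at the right moment rather than on any new estimate. A secondary point requiring care is the order of extraction: the subsequence must be chosen so that the $\varphi$-limits and the $f$-limits hold simultaneously, which is why $\varphi$ is treated first and $f$ second on the already-thinned sequence.
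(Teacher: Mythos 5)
Your proposal is correct and follows essentially the same argument as the paper: extract one subsequence along which both the $\varphi$-limits and the $f$-limits hold, set $G(t)=g(t,\phi(t))$, and control the forward difference by the Lipschitz bound plus pointwise convergence of $f(tq^{s_n},\cdot)$. The only difference is that the paper dismisses the reverse limit with ``similarly,'' whereas you correctly identify that this step requires the transferred Lipschitz constant for $g$ from Theorem \ref{t28} --- a detail worth making explicit, as you do.
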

\begin{proof}It is easy to see that for any given sequence $\{s'_n\}$, there exists
  a subsequence  $\{s_{n}\}\subset\{s'_n\}$ such that
\begin{equation}\label{p6}
  \lim\limits_{n\rightarrow\infty} f(tq^{s_{n}},x)=g(t,x)
\end{equation}
for each $t\in \overline{q^\mathbb{Z}}$ and $x\in \mathbb{X}$,
\begin{equation}\label{p7}
\lim\limits_{n\rightarrow\infty} \varphi(tq^{s_{n}})=\phi(t)
\end{equation}
 for each $t\in \overline{q^\mathbb{Z}}$,
 $$\lim\limits_{n\rightarrow\infty} g(tq^{-s_{n}},x)=f(t,x)$$ for each $t\in \overline{q^\mathbb{Z}}$ and $x\in \mathbb{X}$, and
  $$\lim\limits_{n\rightarrow\infty} \phi(tq^{-s_{n}})=\varphi(t)$$ for each $t\in \overline{q^\mathbb{Z}}$.

Consider the function $G: \overline{q^\mathbb{Z}}\rightarrow \mathbb{X}$  defined by $G(t)=g(t,\phi(t))$, $t\in \overline{q^\mathbb{Z}}$. We will show that $\lim\limits_{n\rightarrow\infty}F(tq^{s_{n}})=G(t)$, for each $t\in \overline{q^\mathbb{Z}}$ and  $\lim\limits_{n\rightarrow\infty}G(tq^{-s_{n}})=F(t)$,  for each $t\in  \overline{q^\mathbb{Z}}$.

In fact, noting that
\begin{eqnarray*}
||F(tq^{s_{n}})-G(t)||&=&||f(tq^{s_{n}},\varphi(tq^{s_{n}}))-f(tq^{s_{n}},\phi(t))\\
&&+f(tq^{s_{n}},\phi(t))-g(t,\phi(t))||\\
&\leq&L\|\varphi(tq^{s_{n}})-\phi(t)\|\\
&&+\|f(tq^{s_{n}},\phi(t))-g(t,\phi(t))\|,
\end{eqnarray*}
by \eqref{p6} and \eqref{p7}, we  get
$$\lim\limits_{n\rightarrow\infty} F(tq^{s_{n}})=G(t), \,\,\mathrm{ for} \,\, \mathrm{each} \,\ t\in \overline{q^\mathbb{Z}}.$$
Similarly we can prove that $\lim\limits_{n\rightarrow\infty}G(tq^{-s_{n}})=F(t)$
for each $t\in\overline{ q^\mathbb{Z}}$. This completes the proof.
\end{proof}

Before ending this section, we give the second type of concepts of almost automorphic functions on the quantum time scale  as follows:

\begin{definition}\label{d2.1b}
Let $\mathbb{X}$ be a $($real or complex$)$ Banach space and $f:\overline{q^\mathbb{Z}}\rightarrow \mathbb{X}$ a $($strongly$)$ continuous function. We say that $f$ is almost automorphic if for every sequence of integer numbers $\{s'_{n}\}\subset \mathbb{Z}$, there exists a subsequence $\{s_{n}\}$ such that:
\[
g(t):=\lim\limits_{n\rightarrow\infty}q^{s_{n}}f(tq^{s_{n}})
\]
is well defined for each $t\in \overline{q^\mathbb{Z}}$ and
\[
\lim\limits_{n\rightarrow\infty}q^{-s_{n}}g(tq^{-s_{n}})=f(t)
\]
for each $t\in \overline{q^\mathbb{Z}}$.
\end{definition}

\begin{definition}\label{d23b}
A continuous function $f:\mathbb{R}\times \mathbb{X} \rightarrow \mathbb{X}$ is said to be almost automorphic in  $t \in \overline{q^\mathbb{Z}}$ for each $x \in \mathbb{X}$,
if for each sequence of integer numbers $\{s_{n}^{'}\}$, there exists a subsequence $\{s_{n}\}$ such that
\[
\lim\limits_{n\rightarrow\infty}q^{s_{n}}f(tq^{s_{n}},x)=g(t,x)\]
 exists for each $t \in \overline{q^\mathbb{Z}}$ and each  $x \in \mathbb{X}$, and
 \[
\lim\limits_{n\rightarrow\infty}q^{-s_{n}}g(tq^{-s_{n}},x)=f(t,x)
\]
 exists for each $t \in \overline{q^\mathbb{Z}}$ and each  $x \in \mathbb{X}$.
\end{definition}

\begin{remark}It is easy to check that
all the results of this section hold for almost automorphic functions defined by Definitions \ref{d2.1} and \ref{d23} are also valid for  almost automorphic functions defined by Definitions  \ref{d2.1b} and \ref{d23b}.
\end{remark}

\section{An equivalent definition of almost automorphic functions on the quantum time scale}
\setcounter{equation}{0}

\indent

In this section, we will give an equivalent definition of almost automorphic functions on the quantum time scale $\overline{q^\mathbb{Z}}$.
To this end,
we introduce a notation $-\infty_q$ and  stipulate $q^{-\infty_q}=0, t\pm(-\infty_q)=t$ and $t>-\infty_q$ for all $t\in \mathbb{Z}$.
Let $f\in C(\overline{q^\mathbb{Z}},\mathbb{X})$, we define a function $\tilde{f}:  \mathbb{Z}\cup \{-\infty_q\}\rightarrow \mathbb{X}$ by
\begin{eqnarray}\label{g1}
\tilde{f}(t)=\left\{\begin{array}{lll}
f(q^t),& t\in \mathbb{Z},\\
f(0),& t=-\infty_q,\end{array}\right.
\end{eqnarray}
that is,
\begin{eqnarray*}
f(t)=\left\{\begin{array}{lll}
\tilde{f}(\log_q t),& t\in q^\mathbb{Z},\\
\lim\limits_{t\rightarrow 0^+}f(t),& t=0.\end{array}\right.
\end{eqnarray*}
 Since $f(t)$ is right continuous at $t=0$, it is clear that the above definition is well defined.

Moreover, for $f\in C(\overline{q^\mathbb{Z}}\times \mathbb{X},\mathbb{X})$, we define a function $\tilde{f}:  \mathbb{Z}\cup \{-\infty_q\}\times \mathbb{X}\rightarrow \mathbb{X}$ by
\begin{eqnarray}\label{g2}
\tilde{f}(t,x)=\left\{\begin{array}{lll}
f(q^t,x),& (t,x)\in \mathbb{Z}\times \mathbb{X},\\
f(0,x),& t=-\infty_q, x\in \mathbb{X},\end{array}\right.
\end{eqnarray}
that is,
\begin{eqnarray*}
f(t,x)=\left\{\begin{array}{lll}
\tilde{f}(\log_q t,x),& (t,x)\in q^\mathbb{Z}\times \mathbb{X},\\
\lim\limits_{t\rightarrow 0}f(t,x),& t=0, x\in \mathbb{X}.\end{array}\right.
\end{eqnarray*}
Since $f(t,x)$ is continuous at $(0,x)$, it is clear that the above definition is well defined.

\begin{definition}\label{feq1}
A function $ f: \mathbb{Z}\cup\{-\infty_q\}\rightarrow\mathbb{X}$ is called almost automorphic
if for every sequence $(s_n')\subset \mathbb{Z}$
 there exists a subsequence $(s_n)\subset (s_n')$
such that
\[
\lim\limits_{n\rightarrow\infty} f(t+s_n)= g(t)
\]
is well defined for each $t \in  \mathbb{Z}\cup\{-\infty_q\}$, and
\[
\lim\limits_{n\rightarrow \infty} g(t-s_n) = f(t)
\]
for each $t \in \mathbb{Z}\cup\{-\infty_q\}$.
\end{definition}

\begin{definition}\label{feq2}
A  function $F: (\mathbb{Z}\cup\{-\infty_q\})\times \mathbb{X}\rightarrow \mathbb{X}$ is
called almost automorphic if for every sequence
 $(s_n')\subset \mathbb{Z}$
 there exists a subsequence $(s_n)\subset \mathbb{Z}$
such that
\[
\lim\limits_{n\rightarrow\infty} F(t+s_n,x)= G(t,x)
\]
is well defined for each $t \in  \mathbb{Z}\cup\{-\infty_q\}$, and
\[
\lim\limits_{n\rightarrow \infty} G(t-s_n,x) = F(t,x)
\]
for each $t \in \mathbb{Z}\cup\{-\infty_q\}$ and $x\in \mathbb{X}$.
\end{definition}

\begin{remark}\label{re31}We can view $ \mathbb{Z}\cup\{-\infty_q\}$ as a kind of  generalized integer number set. 
Obviously, the automorphic functions defined by Definitions \ref{feq1} and \ref{feq2}(which are defined on $\mathbb{Z}\cup\{-\infty_q\}$ or $\mathbb{Z}\cup\{-\infty_q\}\times \mathbb{X}$) share the same properties as  the ordinary automorphic functions  defined on $\mathbb{Z}$ or $\mathbb{Z}\times \mathbb{X}$.
\end{remark}

\begin{definition}\label{bc1}
A  function   $f\in C(\overline{q^\mathbb{Z}},\mathbb{X})$ is
called almost automorphic if and only if  the function $\tilde{f}(t)$ defined by \eqref{g1} is almost automorphic.
\end{definition}

\begin{definition}\label{bc2}
A  function   $f\in C(\overline{q^\mathbb{Z}}\times \mathbb{X},\mathbb{X})$ is
called almost automorphic in  $t \in \overline{q^\mathbb{Z}}$ for each $x \in \mathbb{X}$ if and only if  the function $\tilde{f}(t,x)$ defined by \eqref{g2} is almost automorphic in  $t \in \overline{q^\mathbb{Z}}$ for each $x \in \mathbb{X}$.
\end{definition}

Obviously, Definitions \ref{bc1} and \ref{bc2} are equivalent to Definitions \ref{d2.1} and \ref{d23}, respectively.
Moreover, by Remark \ref{re31}, all of the properties of almost automorphic functions on the quantum time scale can be directly obtained from the corresponding properties of the ordinary almost automorphic functions defined on $\mathbb{Z}$ or $\mathbb{Z}\times \mathbb{X}$.

\section{Automorphic solutions for semilinear dynamic equations on the quantum time scale}
\setcounter{equation}{0}

\indent

In this section,  we will study the existence of automorphic solutions of semilinear dynamic equations on the quantum time scale. Throughout this section, we use the letter $\mathbb{E}$ to stand for either $\mathbb{R}$ or $\mathbb{C}$.

Consider the semilinear dynamic equation on the  quantum time scale:
\begin{equation}\label{qqn1}
D_qx  (t)=B(t)x(t)+g(t,x(t),x(tq^{-\sigma(t)}))),\,\, t\in \overline{q^\mathbb{Z}},
\end{equation}
where $\sigma: \mathbb{T} \rightarrow [0,\infty)_\mathbb{T}$ is a scalar delay function and satisfies $t-\sigma(t)\in \mathbb{T}$ for all $t\in \mathbb{T}$, $B(t)$ is a regressive, rd-continuous $n\times n$ matrix
valued function, $g\in C_{rd}(\mathbb{T}\times \mathbb{E}^{2n},\mathbb{E}^n)$.
 Under the transformation \eqref{g2}, equation \eqref{qqn1} is transformed to
\begin{equation}\label{qqn2}
\Delta \tilde{x}  (n)=A(n)\tilde{x}(n)+f(n,\tilde{x}(n),\tilde{x}(n-\tau(n))),\,\,n\in \mathbb{Z}\cup\{-\infty_q\},
\end{equation}
and vice visa, where $A(n)=(q-1)q^n\tilde{B}(n), f(n)=(q-1)q^n\tilde{g}(n,\tilde{x}(n),\tilde{x}(n-\overline{\sigma}(n)))),\tau(n)=\tilde{\sigma}(n).$

Clearly, if $x(t)$ is a solution of \eqref{qqn1} if and only if  $\tilde{x}(n)$ is a solution of \eqref{qqn2}.

\begin{definition}\cite{ra}
Let  $A(t)$ be an $n\times n$
rd-continuous matrix value function on $\mathbb{T}$, the linear system
\begin{eqnarray}\label{e21}
x^{\Delta}(t)=A(t)x(t),\,\, t\in\mathbb{T}
\end{eqnarray}
is said to admit an exponential dichotomy on $\mathbb{T}$ if there
exist positive constants $K_1,K_2$ and $\alpha_1,\alpha_2$, an invertible projection $P$ commuting with $X(t)$, where $X(t)$ is principal
fundamental matrix solution  of \eqref{e21} satisfying
\begin{eqnarray*}
&&||X(t)PX^{-1}(s)||\leq K_1e_{\ominus
\alpha_1}(t,s),\,\,
s, t \in\mathbb{T}, t \geq s,\\
&&||X(t)(I-P)X^{-1}(s)||\leq K_2e_{\ominus
\alpha_2}(s,t),\,\, s, t \in\mathbb{T}, t \leq s.
\end{eqnarray*}
\end{definition}
\begin{theorem} \cite{ra} Let $\mathbb{T}$ be an almost periodic time scale.
Suppose that
the linear homogeneous system \eqref{e21} admits an exponential dichotomy with the
positive constants $K_1,K_2$ and $\alpha_1,\alpha_2$ and invertible projection $\mathcal{P}$ commuting with
$X(t)$, where $X(t)$ is principal fundamental matrix solution of \eqref{e21},
 then the nonhomogeneous system
\begin{equation}
x^\Delta (t)=A(t)x(t)+f(t),\label{eq1}
\end{equation}
has a solution $x(t)$ of the form
\begin{equation}\label{eq2}
x(t)=\int_{-\infty}^{t}X(t)\mathcal{P}X^{-1}(\sigma(s))f(s)\Delta s-\int_{t}^{\infty}X(t)(1-\mathcal{P})X^{-1}(\sigma(s))f(s)\Delta s.
\end{equation}
 Moreover, we have
\begin{equation*}
\|x\|\leq \bigg(\frac{K_1+\alpha_1}{\alpha_1}+\frac{K_2}{\alpha_2}\bigg)\|f\|.
\end{equation*}
\end{theorem}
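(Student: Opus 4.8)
The plan is to verify directly that the variation-of-constants formula \eqref{eq2} defines a bounded solution of \eqref{eq1}, proceeding in three stages: convergence of the two improper delta-integrals, verification of the dynamic equation, and the a priori norm bound. For convergence, write $\|f\|=\sup_{t\in\mathbb{T}}\|f(t)\|$ and apply the dichotomy estimates to the kernels: for $s\le t$ one has $\|X(t)\mathcal{P}X^{-1}(\sigma(s))\|\le K_1 e_{\ominus\alpha_1}(t,\sigma(s))$, and for $s\ge t$ one has $\|X(t)(I-\mathcal{P})X^{-1}(\sigma(s))\|\le K_2 e_{\ominus\alpha_2}(\sigma(s),t)$. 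Since $e_{\ominus\alpha_1}(t,\sigma(s))\to 0$ as $s\to-\infty$ and $e_{\ominus\alpha_2}(\sigma(s),t)\to 0$ as $s\to+\infty$, the two integrands are dominated by decaying time-scale exponentials, so both integrals converge absolutely and $x(t)$ is well defined and rd-continuous.

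Next I would show that $x=x_1-x_2$, with $x_1,x_2$ the two integral terms, satisfies \eqref{eq1}. The tool is the Leibniz rule for delta-differentiation under a variable limit, $\left(\int_{t_0}^t h(t,s)\Delta s\right)^\Delta=\int_{t_0}^t h^\Delta_t(t,s)\Delta s+h(\sigma(t),t)$, together with the analogous formula for a variable lower limit, which contributes $-h(\sigma(t),t)$. Using $X^\Delta(t)=A(t)X(t)$ the interior integrals reproduce $A(t)x_1(t)$ and $A(t)x_2(t)$, while the boundary terms are $X(\sigma(t))\mathcal{P}X^{-1}(\sigma(t))f(t)$ and $X(\sigma(t))(I-\mathcal{P})X^{-1}(\sigma(t))f(t)$. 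Forming $x^\Delta=x_1^\Delta-x_2^\Delta$ and using $\mathcal{P}+(I-\mathcal{P})=I$ with $X(\sigma(t))X^{-1}(\sigma(t))=I$ collapses the two boundary terms into $f(t)$, giving $x^\Delta(t)=A(t)x(t)+f(t)$ as required.

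For the bound I would estimate $\|x(t)\|$ by the sum of the two integrals of the dichotomy estimates against $\|f\|$, thereby reducing the problem to the scalar quantities $\int_{-\infty}^t e_{\ominus\alpha_1}(t,\sigma(s))\Delta s$ and $\int_{t}^{\infty}e_{\ominus\alpha_2}(\sigma(s),t)\Delta s$. These are controlled through the antiderivative identity $\left(e_{\ominus\alpha}(t,s)\right)^{\Delta_s}=\frac{\alpha}{1+\mu(s)\alpha}\,e_{\ominus\alpha}(t,\sigma(s))$ and its mirror image, which upon integration yield $1-e_{\ominus\alpha}(t,t_0)$-type expressions. Combining these scalar estimates with the constants $K_1,K_2$ produces coefficients $\frac{K_1+\alpha_1}{\alpha_1}$ and $\frac{K_2}{\alpha_2}$, and summing gives the asserted inequality $\|x\|\le\left(\frac{K_1+\alpha_1}{\alpha_1}+\frac{K_2}{\alpha_2}\right)\|f\|$.

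I expect the second stage to be the main obstacle: applying the time-scale Leibniz rule to improper integrals with a variable limit requires justifying the interchange of delta-differentiation with the limit defining the improper integral, and one must verify that the boundary contributions evaluated at $\sigma(t)$ recombine exactly into $f(t)$. A secondary delicate point is the sharp evaluation of the scalar exponential integrals: because the graininess factor $\mu(s)\alpha$ enters the antiderivative identity, recovering the precise constants in the final estimate demands careful handling of that factor rather than a crude domination.
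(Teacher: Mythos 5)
The paper offers no proof of this statement at all: the theorem is imported verbatim from reference \cite{ra} (Ad{\i}var--Koyuncuo\u{g}lu--Raffoul), so there is no in-paper argument to compare yours against. Judged on its own, your three-stage plan is the standard and essentially correct way to verify the result, and it is the same strategy used in the cited source: absolute convergence of the two improper delta-integrals from the dichotomy estimates; delta-differentiation under the integral via the time-scale Leibniz rule, with the interior terms reproducing $A(t)x(t)$ because $X^{\Delta}=AX$ and the two boundary terms summing to $X(\sigma(t))\big[\mathcal{P}+(I-\mathcal{P})\big]X^{-1}(\sigma(t))f(t)=f(t)$ (note you do not even need the commutation of $\mathcal{P}$ with $X$ for this cancellation, only for writing each boundary term separately as $\mathcal{P}f(t)$ and $(I-\mathcal{P})f(t)$); and the norm bound by integrating the scalar exponentials.

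The one genuine soft spot is the final constant, which you flag but do not resolve. The second integral is clean: from $\big(e_{\ominus\alpha_2}(s,t)\big)^{\Delta_s}=-\alpha_2\,e_{\ominus\alpha_2}(\sigma(s),t)$ one gets exactly $\int_{t}^{\infty}e_{\ominus\alpha_2}(\sigma(s),t)\Delta s=\tfrac{1}{\alpha_2}$, hence the term $\tfrac{K_2}{\alpha_2}\|f\|$. The first integral, however, satisfies $e_{\ominus\alpha_1}(t,\sigma(s))=(1+\mu(s)\alpha_1)\,e_{\ominus\alpha_1}(t,s)$, so the natural evaluation gives $\int_{-\infty}^{t}e_{\ominus\alpha_1}(t,\sigma(s))\Delta s\le\tfrac{1}{\alpha_1}+\sup_{s}\mu(s)$, i.e.\ a bound of the form $K_1\tfrac{1+\alpha_1\bar{\mu}}{\alpha_1}$ rather than $\tfrac{K_1+\alpha_1}{\alpha_1}$; these coincide only under an extra relation between $K_1$ and the graininess (e.g.\ $K_1\bar{\mu}=1$), and on a general almost periodic time scale the graininess need not even be bounded a priori. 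So your scheme proves existence of the bounded solution and an estimate of the stated type, but recovering the precise coefficient $\tfrac{K_1+\alpha_1}{\alpha_1}$ requires either additional hypotheses or the specific computation carried out in \cite{ra}; as written, that step is a gap rather than a routine verification.
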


Consider the following semilinear dynamic equation on almost periodic time scale $\mathbb{T}$:
\begin{equation}\label{qqqn1}
x^\Delta (t)=A(t)x(t)+f(t,x(t),x(t-\tau(t)))),
\end{equation}
where $\tau: \mathbb{T} \rightarrow [0,\infty)_\mathbb{T}$ is a scalar delay function and satisfies $t-\tau(t)\in \mathbb{T}$ for all $t\in \mathbb{T}$, $A(t)$ is a regressive, rd-continuous $n\times n$ matrix
valued function, $f\in C_{rd}(\mathbb{T}\times \mathbb{E}^{2n},\mathbb{E}^n)$. The corresponding linear homogeneous system of \eqref{qqqn1} is
\begin{equation}\label{qqqn2}
x^\Delta (t)=A(t)x(t).
\end{equation}

We make the following assumptions:
\begin{itemize}
  \item [$(A_1)$] Functions $\tau(t), A(t)$ and $f(t, u, v)$ are almost automorphic in $t$.
  \item [$(A_2)$]There exists a constant $L_1,L_2> 0$ such that
  \[
  ||f(t,u_1,v_2)-f(t,u_2,v_2)||\leq L_1||u_1-u_2||+L_2||v_1-v_2||
  \]
  for all $t\in \mathbb{T}$ and for any vector valued functions $u$ and $v$ defined on $\mathbb{T}$.
  \item [$(A_3)$]The linear homogeneous system \eqref{qqqn2} admits an exponential dichotomy with the
positive constants $K_1,K_2$ and $\alpha_1,\alpha_2$ and invertible projection $P$ commuting with
$X(t)$, where $X(t)$ is principal fundamental matrix solution of \eqref{qqqn2}.
\end{itemize}

Now, define the mapping $\Psi$ by
\begin{eqnarray*}
(\Psi x)(t)&:=& \int_{-\infty}^{t}X(t)\mathcal{P}X^{-1}(\sigma(s))f(s,x(s),x(s-\tau(s)))\Delta s\nonumber\\
&&-\int_{t}^{\infty}X(t)(1-\mathcal{P})X^{-1}(\sigma(s))f(s,x(s),x(s-\tau(s)))\Delta s.
\end{eqnarray*}

The following result can be proven similar to Lemma 6 in \cite{ra}, hence we omit
it.
\begin{lemma}Suppose $(A_1)$-$(A_3)$ hold. Then
the mapping $\Psi$ maps $\mathbb{A}\mathbb{A}(\mathbb{E}^n)$ into $\mathbb{A}\mathbb{A}(\mathbb{E}^n)$.
\end{lemma}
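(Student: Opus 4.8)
The plan is to decompose the statement into two essentially independent facts: first, that the nonlinear superposition $F(s):=f(s,x(s),x(s-\tau(s)))$ belongs to $\mathbb{A}\mathbb{A}(\mathbb{E}^n)$ whenever $x$ does; and second, that the linear Green's-function operator sending a forcing term $F$ to the integral in the definition of $\Psi$ maps $\mathbb{A}\mathbb{A}(\mathbb{E}^n)$ into itself. Granting both, $\Psi x$ is the image under the linear operator of an almost automorphic function, hence almost automorphic.

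For the first fact, I would start from $x\in \mathbb{A}\mathbb{A}(\mathbb{E}^n)$ and show that the delayed argument $t\mapsto x(t-\tau(t))$ is again almost automorphic: since $\tau$ is almost automorphic by $(A_1)$ and $x$ is almost automorphic, one extracts (diagonally) a common subsequence $\{s_n\}$ along which $x(\cdot q^{s_n})$, $\tau(\cdot q^{s_n})$ and their reverse limits all converge, and then argues exactly as in Theorem \ref{t29}, using the (strong) continuity of $x$ to pass the limit through the composition. Combining $x(\cdot)$, $x(\cdot-\tau(\cdot))$ and the Lipschitz hypothesis $(A_2)$ with the two–slot analogue of Theorem \ref{t29} then yields that $F$ is almost automorphic; the only new ingredient relative to Theorem \ref{t29} is carrying a second (delayed) variable, which the estimate $(A_2)$ handles term by term. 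Via the equivalence established in Section 4, this step may also be read off directly from the corresponding ordinary discrete composition result on $\mathbb{Z}\cup\{-\infty_q\}$.

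For the second fact, fix an arbitrary sequence $\{s_n'\}\subset\mathbb{Z}$ and, refining the subsequence above if necessary, pass to $\{s_n\}$ along which $A(\cdot q^{s_n})\to \bar A$ and $F(\cdot q^{s_n})\to \bar F$ pointwise, together with the reverse limits $\bar A(\cdot q^{-s_n})\to A$ and $\bar F(\cdot q^{-s_n})\to F$. A change of variable in the delta integral rewrites $(\Psi x)(tq^{s_n})$ as the same Green's-function integral but with the shifted data $A_n(t):=A(tq^{s_n})$ and $F_n(s):=F(sq^{s_n})$. The key structural point is that each shifted homogeneous system $x^\Delta=A_n(t)x$ still admits an exponential dichotomy with the \emph{same} constants $K_1,K_2,\alpha_1,\alpha_2$ as in $(A_3)$, so the shifted kernels $X_n(t)\mathcal{P}X_n^{-1}(\sigma(s))$ and $X_n(t)(I-\mathcal{P})X_n^{-1}(\sigma(s))$ obey the uniform exponential bounds $K_1 e_{\ominus\alpha_1}(t,\sigma(s))$ and $K_2 e_{\ominus\alpha_2}(\sigma(s),t)$, independent of $n$. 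These bounds furnish an integrable majorant, so by dominated convergence for the time-scale integral one may pass to the limit inside the integral and obtain $\lim_n(\Psi x)(tq^{s_n})=\bar y(t)$, where $\bar y$ is the analogous integral built from $\bar A$ and $\bar F$. Applying the reverse limits in exactly the same way recovers $(\Psi x)(t)$, which is precisely the defining property of almost automorphy.

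I expect the main obstacle to be this second fact, and within it the convergence of the shifted Green's kernels: one must verify that the exponential dichotomy is inherited by the shifted (and limit) systems with uniform constants and that the shifted fundamental solutions converge in a manner compatible with dominated convergence. Once the uniform exponential estimates are in place the interchange of limit and integral is routine, and continuity of $\Psi x$ at the right-dense point $0$ (equivalently, well-definedness at $-\infty_q$ under the transformation of Section 4) follows from the uniform bound together with Remark \ref{r1}. This is exactly where the phrase ``similar to Lemma 6 in \cite{ra}'' carries the weight, so I would import the exponential-dichotomy estimates and the operator norm bound from there rather than re-derive them.
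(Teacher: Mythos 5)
The paper does not actually prove this lemma: it says only that the result ``can be proven similar to Lemma 6 in \cite{ra}, hence we omit it,'' so there is no in-paper argument to compare against. Your outline --- almost automorphy of the superposition $s\mapsto f(s,x(s),x(s-\tau(s)))$ via the composition theorem with the Lipschitz condition $(A_2)$, followed by invariance of $\mathbb{A}\mathbb{A}(\mathbb{E}^n)$ under the Green's-function operator using the uniform dichotomy bounds of $(A_3)$ and dominated convergence --- is exactly the argument of the cited Lemma 6, so it is the approach the paper implicitly intends. The two points you single out are indeed where the weight lies and are imported rather than proved, consistent with the paper's own treatment: that the translated and limit systems inherit the exponential dichotomy with the \emph{same} constants is a separate nontrivial fact established in \cite{ra}, and the almost automorphy of the delayed composition $t\mapsto x(t-\tau(t))$ relies on the discreteness of the relevant time scale (pointwise convergence of the $\mathbb{T}$-valued $\tau(t+s_n)$ forces eventual constancy along the subsequence), a detail worth stating explicitly since on a general time scale one would instead need uniform continuity of $x$. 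One cosmetic mismatch: the lemma is formulated on a general almost periodic time scale $\mathbb{T}$ with additive translations $t+s_n$, whereas you write the multiplicative shifts $tq^{s_n}$ of Section 3; the argument is unaffected, but the notation should follow the additive setting in which $\Psi$ and $(A_1)$--$(A_3)$ are stated.
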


\begin{theorem}\label{thmzy1}
Suppose $(A_1)$-$(A_3)$ hold. Assume further that
\begin{itemize}
  \item [$(A_4)$]$\big(\frac{K_1+\alpha_1}{\alpha_1}+\frac{K_2}{\alpha_2}\big)(L_1+L_2)<1$.
\end{itemize}
Then
\eqref{qqqn1} has a unique almost automorphic solution.
\end{theorem}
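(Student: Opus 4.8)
The plan is to establish existence and uniqueness via the Banach fixed point theorem applied to the operator $\Psi$ on the Banach space $\mathbb{AA}(\mathbb{E}^n)$. The preceding lemma already guarantees that $\Psi$ maps $\mathbb{AA}(\mathbb{E}^n)$ into itself, so the space is preserved; what remains is to show that $\Psi$ is a contraction under hypothesis $(A_4)$. Since Theorem \ref{t2.5} (and the associated remark) established that $\mathbb{AA}(\mathbb{E}^n)$ equipped with the supremum norm is a Banach space, a unique fixed point of $\Psi$ will then follow immediately, and this fixed point is precisely the desired almost automorphic solution of \eqref{qqqn1} by construction of $\Psi$ from the variation-of-constants formula \eqref{eq2}.

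The key estimate is the contraction bound. Given $x, y \in \mathbb{AA}(\mathbb{E}^n)$, I would write $(\Psi x)(t) - (\Psi y)(t)$ as the difference of the two integral terms and estimate each using the exponential dichotomy bounds from $(A_3)$ together with the Lipschitz hypothesis $(A_2)$. Concretely, on the first integral I would bound the kernel by $\|X(t)\mathcal{P}X^{-1}(\sigma(s))\| \le K_1 e_{\ominus \alpha_1}(t,\sigma(s))$ and on the second by $\|X(t)(I-\mathcal{P})X^{-1}(\sigma(s))\| \le K_2 e_{\ominus \alpha_2}(\sigma(s),t)$, while $(A_2)$ gives
\[
\|f(s,x(s),x(s-\tau(s))) - f(s,y(s),y(s-\tau(s)))\| \le (L_1+L_2)\|x-y\|_{\mathbb{AA}(\mathbb{E}^n)}.
\]
Carrying out the two delta-integrals of the exponential functions over $(-\infty, t]$ and $[t,\infty)$ reproduces exactly the constant $\big(\frac{K_1+\alpha_1}{\alpha_1}+\frac{K_2}{\alpha_2}\big)$ already appearing in the norm estimate at the end of the cited theorem. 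This yields
\[
\|\Psi x - \Psi y\|_{\mathbb{AA}(\mathbb{E}^n)} \le \Big(\tfrac{K_1+\alpha_1}{\alpha_1}+\tfrac{K_2}{\alpha_2}\Big)(L_1+L_2)\,\|x-y\|_{\mathbb{AA}(\mathbb{E}^n)},
\]
and by $(A_4)$ the coefficient is strictly less than $1$, so $\Psi$ is a contraction.

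I expect the main obstacle to be routine rather than conceptual, since the heavy lifting has been offloaded onto the earlier lemma and the dichotomy theorem. The delicate point is verifying that the integral estimates on the time scale behave exactly as in the cited reference \cite{ra}: one must confirm that the time-scale exponentials $e_{\ominus\alpha_i}$ integrate to the claimed bounds $\frac{K_1+\alpha_1}{\alpha_1}$ and $\frac{K_2}{\alpha_2}$, which already follows from the norm inequality in the cited theorem, so I would invoke that estimate directly rather than recompute the $\Delta$-integrals. The only genuine care needed is in the final identification: once the fixed point $x^\ast = \Psi x^\ast$ is obtained, I would note that $(A_1)$ guarantees $f$, $A$, and $\tau$ are almost automorphic so that the composition $f(t,x(t),x(t-\tau(t)))$ lies in the right class (this is exactly what the lemma used, via the composition result analogous to Theorem \ref{t29}), and then conclude by the variation-of-constants representation that $x^\ast$ solves \eqref{qqqn1} and is unique. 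This completes the argument.
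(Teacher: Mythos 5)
Your proposal matches the paper's proof: both apply the Banach fixed point theorem to $\Psi$ on $\mathbb{A}\mathbb{A}(\mathbb{E}^n)$, invoke the preceding lemma for invariance, and derive the same contraction constant $\big(\frac{K_1+\alpha_1}{\alpha_1}+\frac{K_2}{\alpha_2}\big)(L_1+L_2)$ from the dichotomy bounds and the Lipschitz condition, concluding via $(A_4)$. No substantive difference.
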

\begin{proof}
For any $x,y\in \mathbb{A}\mathbb{A}(\mathbb{E}^n)$, we have
\begin{eqnarray*}
&&||\Psi x-\Psi y||\nonumber\\
&=& \sup\limits_{t\in \mathbb{T}}\bigg|\int_{-\infty}^{t}X(t)\mathcal{P}X^{-1}(\sigma(s))[f(s,x(s),x(s-\tau(s)))-f(s,y(s),y(s-\tau(s)))]\Delta s\nonumber\\
&&-\int_{t}^{\infty}X(t)(I-\mathcal{P})X^{-1}(\sigma(s))[f(s,x(s),x(s-\tau(s)))-f(s,y(s),y(s-\tau(s)))]\Delta s\bigg|\nonumber\\
&\leq& \sup\limits_{t\in \mathbb{T}}\bigg|\int_{-\infty}^{t}K_1e_{\ominus\alpha_1}(t,\sigma(s))(L_1+L_2)||x-y||\Delta s\nonumber\\
&&-\int_{t}^{\infty}K_2e_{\ominus\alpha_2}(\sigma(s),t)(L_1+L_2)||x-y||\Delta s\bigg|
\nonumber\\
&\leq& \bigg(\frac{K_1+\alpha_1}{\alpha_1}+\frac{K_2}{\alpha_2}\bigg)(L_1+L_2)||x-y||
\end{eqnarray*}
Hence,  $\Phi$ is a
contraction.  Therefore,  $\Phi$ has a unique fixed point in  $\mathbb{A}\mathbb{A}(\mathbb{E}^n)$, so, \eqref{qqqn1} has a unique
almost automorphic solution.
\end{proof}

In Theorem \ref{thmzy1}, if we take $\mathbb{T}=\mathbb{Z}\cup\{-\infty_q\}$, then we have
\begin{theorem}
Suppose $(A_1)$-$(A_4)$ hold. Then
\eqref{qqn2} has a unique almost automorphic solution, and so \eqref{qqn1} has a unique almost automorphic solution.
\end{theorem}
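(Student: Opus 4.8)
The plan is to reduce everything to the previously proved results by means of the transformation introduced in Section 4. The final theorem asserts that equation \eqref{qqn2}, which lives on the generalized integer set $\mathbb{Z}\cup\{-\infty_q\}$, has a unique almost automorphic solution, and that this is equivalent to the unique almost automorphic solvability of the original $q$-difference equation \eqref{qqn1}. The key observation is that Theorem \ref{thmzy1} is stated for an \emph{arbitrary} almost periodic time scale $\mathbb{T}$, so the entire burden is to verify that the specialization $\mathbb{T}=\mathbb{Z}\cup\{-\infty_q\}$ is legitimate and that hypotheses $(A_1)$--$(A_4)$ for \eqref{qqqn1} coincide with the stated hypotheses for \eqref{qqn2}.

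First I would observe that $\mathbb{Z}\cup\{-\infty_q\}$ is, by Remark \ref{re31}, a kind of generalized integer set on which the whole theory of almost automorphic functions of Definitions \ref{feq1} and \ref{feq2} applies, and on which the difference operator $\Delta$ acts exactly as on $\mathbb{Z}$. Hence \eqref{qqn2} is precisely an instance of \eqref{qqqn1} with $\mathbb{T}=\mathbb{Z}\cup\{-\infty_q\}$, $A(n)$ the discrete regressive matrix function, $f(n,u,v)$ the nonlinearity appearing in \eqref{qqn2}, and $\tau(n)=\tilde{\sigma}(n)$ the delay. I would then check that the structural hypotheses transfer: $(A_1)$ holds because $\tau(n)$, $A(n)$ and $f(n,u,v)$ are the transforms under \eqref{g2} of the almost automorphic data of \eqref{qqn1}, and by Definitions \ref{bc1} and \ref{bc2} these transforms are themselves almost automorphic in the sense of Definitions \ref{feq1} and \ref{feq2}; $(A_2)$ is the Lipschitz condition, which is preserved because the multiplicative factor $(q-1)q^n$ is bounded on the relevant range once the exponential dichotomy is in force; and $(A_3)$ is assumed directly.

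With these verifications in place, Theorem \ref{thmzy1} applies verbatim to \eqref{qqn2} and yields a unique fixed point of the operator $\Psi$ in $\mathbb{A}\mathbb{A}(\mathbb{E}^n)$, which is the unique almost automorphic solution $\tilde{x}(n)$ of \eqref{qqn2}. Finally I would invoke the equivalence established just after \eqref{qqn2}, namely that $x(t)$ solves \eqref{qqn1} if and only if $\tilde{x}(n)$ solves \eqref{qqn2}, together with Definition \ref{bc1}, which says $x$ is almost automorphic on $\overline{q^\mathbb{Z}}$ exactly when $\tilde{x}$ is almost automorphic on $\mathbb{Z}\cup\{-\infty_q\}$. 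This pullback transfers both existence and uniqueness back to the original equation, completing the proof.

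The step I expect to be the main obstacle is the careful verification that the transformation \eqref{g2} genuinely sends the hypotheses $(A_1)$--$(A_4)$ for the $q$-equation to the corresponding hypotheses for the discrete equation, and in particular that $\mathbb{Z}\cup\{-\infty_q\}$ really qualifies as an almost periodic time scale in the sense of Definition \ref{def21} so that Theorem \ref{thmzy1} is applicable. One must confirm that the factor $(q-1)q^n$ entering $A(n)$ and $f(n)$ does not destroy regressivity, the exponential dichotomy, or the contraction estimate $(A_4)$; since Theorem \ref{thmzy1} already absorbs these constants into $K_1,K_2,\alpha_1,\alpha_2,L_1,L_2$, the remaining work is purely bookkeeping rather than a new analytic difficulty, so the proof is essentially an application of the abstract result followed by the equivalence of the two formulations.
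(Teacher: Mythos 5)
Your proposal is correct and takes essentially the same route as the paper: the paper offers no separate argument, presenting this result as an immediate specialization of Theorem \ref{thmzy1} to $\mathbb{T}=\mathbb{Z}\cup\{-\infty_q\}$ combined with the stated equivalence between solutions of \eqref{qqn1} and \eqref{qqn2} under the transformation and Definitions \ref{bc1}--\ref{bc2}. Your extra bookkeeping about transferring $(A_1)$--$(A_4)$ and the factor $(q-1)q^n$ simply makes explicit what the paper leaves implicit.
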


Consider a linear quantum difference equation
\begin{equation}
D_qx(t)=A(t)x(t)+f(t),\,\, t \in \overline{q^\mathbb{Z}}, \label{qq1}
\end{equation}
where $A$ is an $n\times n$ matrix
valued function and $f$ is an $n$-dimensional vector valued function.
Under the transformation \eqref{g1}, \eqref{qq1} transforms to
\begin{equation}\label{qq2}
\Delta \tilde{x} (n)=(q-1)q^n\tilde{A}(n)\tilde{x}(n)+(q-1)q^n\tilde{f}(n),\,\, n \in \mathbb{Z}\cup\{-\infty_q\},
\end{equation}
and vice versa.

Consider the following non-autonomous linear difference equation
\begin{equation}\label{az1}
  x(k+1)=A(k)x(k)+f(k),\,\, k\in \mathbb{Z}\cup\{-\infty_q\},
\end{equation}
where $A(k)$ are given non-singular $n\times n$ matrices with elements $a_{ij}(k), 1 \leq i, j \leq n, f : \mathbb{Z}\rightarrow\mathbb{E}^n$ is a given $n\times 1$ vector
function and $x(k)$ is an unknown $n\times 1$ vector with components $x_i(k), 1 \leq i \leq n$. Its associated homogeneous equation is given by
\begin{equation}\label{az2}
  x(k+1)=A(k)x(k),\,\, k\in \mathbb{Z}\cup\{-\infty_q\}.
\end{equation}

Similar to Definition 2.11 in \cite{azz}, we give the following definition:
\begin{definition}  Let $U(k)$ be the principal fundamental matrix of the difference system \eqref{az2}. The system  \eqref{az2} is said to
possess an exponential dichotomy if there exists a projection $P$, which commutes with $U(k)$, and positive constants $\eta, \nu, \alpha, \beta$
such that for all $k, l \in \mathbb{Z}\cup\{-\infty_q\}$, we have
\begin{eqnarray*}
& || U(k)PU^{-1}(l)|| \leq \eta e^{-\alpha(k-l)},\,\,k\geq l,\\
 &||U(k)(I-P)U^{-1}(l)||\leq \upsilon e^{-\beta(l-k)},\,\, l\geq k.
\end{eqnarray*}
\end{definition}

Similar to the proof of Theorem 3.1 in \cite{azz}, one can easily show that
\begin{theorem} Suppose $A(k)$ is discrete almost automorphic and a non-singular matrix and the set $\{A^{-1}(k)\}_{k\in \mathbb{Z}\cup\{-\infty_q\}}$ is bounded. Also,
suppose the function $f : \mathbb{Z}\cup\{-\infty_q\}\rightarrow \mathbb{E}^n$ is a discrete almost automorphic function and Eq. \eqref{az2} admits an exponential dichotomy with
positive constants $\nu, \eta, \beta$ and $\alpha$. Then, the system \eqref{az1} has an almost automorphic solution on $\mathbb{Z}\cup\{-\infty_q\}$.
\end{theorem}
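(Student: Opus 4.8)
The plan is to exhibit the bounded solution of \eqref{az1} explicitly through the discrete Green's function attached to the dichotomy, and then to verify in turn that this candidate is well defined, solves the equation, and is almost automorphic. Let $U(k)$ be the principal fundamental matrix of \eqref{az2}, so that $U(k+1)=A(k)U(k)$, and define
\[
G(k,l)=\begin{cases} U(k)PU^{-1}(l+1), & l<k,\\ -\,U(k)(I-P)U^{-1}(l+1), & l\geq k,\end{cases}
\]
setting $x(k)=\sum_{l=-\infty}^{\infty}G(k,l)f(l)$. Since $f$ is discrete almost automorphic it is bounded, say $\|f(l)\|\le M$; the dichotomy estimates then give $\|G(k,l)f(l)\|\le \eta e^{-\alpha(k-l-1)}M$ for $l<k$ and $\|G(k,l)f(l)\|\le \nu e^{-\beta(l+1-k)}M$ for $l\ge k$, so both geometric series converge absolutely and $x$ is bounded. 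Using $U(k+1)=A(k)U(k)$ and the fact that $P$ commutes with $U$, a direct term-by-term computation gives $x(k+1)-A(k)x(k)=Pf(k)+(I-P)f(k)=f(k)$, so $x$ solves \eqref{az1}.

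It then remains to prove that $x$ is almost automorphic. Given any $\{s_n'\}\subset\mathbb{Z}$, the almost automorphy of $A$ and of $f$, combined with a diagonal extraction over the countable set $\mathbb{Z}\cup\{-\infty_q\}$, produces a common subsequence $\{s_n\}$ for which $A(k+s_n)\to B(k)$ and $f(k+s_n)\to g(k)$ pointwise, together with the companion backward limits $B(k-s_n)\to A(k)$ and $g(k-s_n)\to f(k)$. The crucial observation is that the translated homogeneous system $x(k+1)=A(k+s_n)x(k)$ has $U(\cdot+s_n)$ as a fundamental matrix and therefore admits an exponential dichotomy with the same projection $P$ and the same constants $\eta,\nu,\alpha,\beta$, because $\|U(k+s_n)PU^{-1}(l+s_n)\|\le\eta e^{-\alpha(k-l)}$ for $k\ge l$, and analogously for $I-P$. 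Writing $G(k+s_n,l+s_n)=\Phi(k+s_n,l+1+s_n)P$ for $l<k$, where $\Phi(p,r)=U(p)U^{-1}(r)$ is the transition matrix, and noting that $\Phi(k+s_n,l+1+s_n)$ is a finite product of the matrices $A(\cdot+s_n)$ (or their inverses), the pointwise convergence $A(\cdot+s_n)\to B(\cdot)$ gives $\Phi(k+s_n,l+1+s_n)\to V(k)V^{-1}(l+1)$, with $V$ the principal fundamental matrix of the limit system $y(k+1)=B(k)y(k)$; hence $G(k+s_n,l+s_n)\to\tilde G(k,l)$, where $\tilde G$ is the Green's function of the limit system and obeys the same exponential bounds.

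With this in hand I would conclude by dominated convergence. From
\[
x(k+s_n)=\sum_{m=-\infty}^{\infty}G(k+s_n,m+s_n)\,f(m+s_n),
\]
the uniform estimate $\|G(k+s_n,m+s_n)f(m+s_n)\|\le C e^{-\gamma|m-k|}M$, independent of $n$ with $\gamma=\min\{\alpha,\beta\}$, serves as a summable dominating function, while the summand converges pointwise to $\tilde G(k,m)g(m)$; therefore $x(k+s_n)\to y(k):=\sum_m\tilde G(k,m)g(m)$ for each $k$. Running the identical argument with the backward limits $B(\cdot-s_n)\to A(\cdot)$ and $g(\cdot-s_n)\to f(\cdot)$ yields $y(k-s_n)\to x(k)$. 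At the exceptional point $-\infty_q$ every shift is stationary, so both limits hold trivially there, and the almost automorphy of $x$ is established.

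The main obstacle is the step controlling the fundamental matrix under translation: unlike $A(k)$ and $f(k)$, the quantity $U(k)U^{-1}(l+1)$ is built from a whole product of matrices, so its convergence along $\{s_n\}$ does not follow directly from the pointwise convergence of $A$. The resolution rests on two ingredients already present in the hypotheses. First, the boundedness of $\{A^{-1}(k)\}$ keeps the matrix inverses under control and forces the limit matrices $B(k)$ to be non-singular, so that $V(k)^{-1}$ exists and the product formula passes to the limit on each finite window. Second, the exponential dichotomy survives translation with unchanged constants, which supplies the $n$-uniform dominating bound needed to upgrade windowwise convergence of the transition matrices to convergence of the full Green's-function series.
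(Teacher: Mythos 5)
Your proof is correct and follows exactly the route the paper intends: the paper gives no argument of its own here, deferring to Theorem 3.1 of Lizama and Mesquita \cite{azz}, whose proof is precisely your Green's-function representation $x(k)=\sum_l G(k,l)f(l)$, the observation that the dichotomy constants are unchanged under translation, and dominated convergence of the series using the boundedness of $\{A^{-1}(k)\}$ to pass the finite products of $A(\cdot+s_n)$ (and their inverses) to the limit. Nothing essential is missing from your reconstruction.
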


\begin{corollary}
Suppose $B(n):=(q-1)q^n\tilde{A}(n)+I$ is discrete almost automorphic and a non-singular matrix and the set $\{B^{-1}(n)\}_{n\in \mathbb{Z}\cup\{-\infty_q\}}$ is bounded. Also,
suppose the function $g:=(q-1)q^n\tilde{f}(n) : \mathbb{Z}\cup\{-\infty_q\}\rightarrow \mathbb{E}^n$ is a discrete almost automorphic function and equation
\[
\Delta y(n)=B(n)y(n)+g(n)
\] admits an exponential dichotomy with
positive constants $\nu, \eta, \beta$ and $\alpha$. Then, the system \eqref{qq1} has an almost automorphic solution on $\overline{q^\mathbb{Z}}$.
\end{corollary}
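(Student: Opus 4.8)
The plan is to deduce the Corollary from the preceding Theorem on the non-autonomous linear difference equation \eqref{az1} by transporting everything through the transformation \eqref{g1} introduced in Section 4, so that the quantum equation \eqref{qq1} becomes a discrete equation of exactly the type \eqref{az1}.

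First I would rewrite \eqref{qq1} in discrete form. Applying \eqref{g1} turns \eqref{qq1} into \eqref{qq2}, i.e.
\[
\Delta\tilde{x}(n)=(q-1)q^{n}\tilde{A}(n)\tilde{x}(n)+(q-1)q^{n}\tilde{f}(n),\qquad n\in\mathbb{Z}\cup\{-\infty_q\}.
\]
Using the forward difference $\Delta\tilde{x}(n)=\tilde{x}(n+1)-\tilde{x}(n)$ and collecting terms gives
\[
\tilde{x}(n+1)=\big[I+(q-1)q^{n}\tilde{A}(n)\big]\tilde{x}(n)+(q-1)q^{n}\tilde{f}(n)=B(n)\tilde{x}(n)+g(n),
\]
with $B(n)$ and $g(n)$ as in the statement. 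This is precisely an equation of the form \eqref{az1} on $\mathbb{Z}\cup\{-\infty_q\}$, whose associated homogeneous system is \eqref{az2}.

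Next I would verify that the hypotheses of the Corollary are exactly those of the preceding Theorem applied to this equation: $B(n)$ is discrete almost automorphic, nonsingular, with $\{B^{-1}(n)\}$ bounded (playing the role of $A(k)$); $g(n)$ is discrete almost automorphic (playing the role of $f(k)$); and the homogeneous system $\Delta y(n)=B(n)y(n)+g(n)$ admits an exponential dichotomy with constants $\nu,\eta,\beta,\alpha$. The Theorem then yields a discrete almost automorphic solution $\tilde{x}:\mathbb{Z}\cup\{-\infty_q\}\to\mathbb{E}^{n}$ of \eqref{qq2}.

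Finally I would pull $\tilde{x}$ back to $\overline{q^\mathbb{Z}}$. By the ``vice versa'' part of the correspondence between \eqref{qq1} and \eqref{qq2}, the function $x$ determined by $x(q^{n})=\tilde{x}(n)$ and $x(0)=\tilde{x}(-\infty_q)$ solves \eqref{qq1}; and by Definition \ref{bc1}, almost automorphy of $\tilde{x}$ on $\mathbb{Z}\cup\{-\infty_q\}$ is exactly the statement that $x$ is almost automorphic on $\overline{q^\mathbb{Z}}$. I expect the only genuinely delicate point to be the continuity bookkeeping at the unique right-dense point $t=0$: one must confirm that the pulled-back $x$ lies in $C(\overline{q^\mathbb{Z}},\mathbb{E}^{n})$, i.e. that $\lim_{t\to 0^{+}}x(t)=x(0)$, equivalently $\lim_{n\to-\infty}\tilde{x}(n)=\tilde{x}(-\infty_q)$ (cf. Remark \ref{r1}). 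This is inherited from the fact that, in Definition \ref{feq1}, the point $-\infty_q$ is a genuine element of the domain at which the almost automorphy limits are required to hold, so the value $\tilde{x}(-\infty_q)$ is tied to the behaviour of $\tilde{x}(n)$ as $n\to-\infty$. Once this is checked the conclusion is immediate, the whole argument being a transport of the preceding Theorem through the bijection of Section 4.
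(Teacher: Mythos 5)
Your proposal is correct and is exactly the argument the paper intends: the paper states this corollary without proof, as an immediate consequence of the preceding theorem applied to the transformed equation \eqref{qq2}, i.e.\ $\tilde{x}(n+1)=B(n)\tilde{x}(n)+g(n)$, followed by pulling the discrete almost automorphic solution back through \eqref{g1}. The continuity issue at $t=0$ that you flag is a genuine subtlety which the paper silently ignores, so your treatment is, if anything, more careful than the source.
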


\section{ Conclusion}
\setcounter{equation}{0}

\indent

In this paper, we proposed two types of concepts of almost automorphic functions on the quantum time scale and studied some of their basic properties. Moreover, based on the transformation between functions defined on the quantum time scale and functions defined on the set of generalized integer numbers, we gave equivalent definitions of   almost automorphic functions on the quantum time scale.
As an application of our results, we established the existence of almost automorphic solutions for semilinear dynamic equations on the quantum time scale.
By using the methods and results of this paper,
for example, one can study the almost automorphy of neural networks on the quantum time scale and population dynamical models on the quantum time scale and so on. Furthermore,
by using the transformation and the set of generalized integer numbers introduced in Section 3 of this paper,  one can propose   concepts of  almost periodic functions,
  pseudo almost periodic functions,
   weighted pseudo
almost automorphic functions,  almost periodic set-valued functions, almost periodic functions in the sense
of Stepanov  on the quantum time scale and so on.

\end{document}